\newcommand{\s}[1]{{\mathcal #1}}
\newcommand{\bb}[1]{{\mathbb #1}}
\newtheorem{theorem}{Theorem} 
\newtheorem{corollary}[theorem]{Corollary}
\newtheorem{lemma}[theorem]{Lemma}
\newtheorem{proposition}[theorem]{Proposition}
\newtheorem{definition}[theorem]{Definition}
\newtheorem{remark}[theorem]{Remark}
\newtheorem{assumption}[theorem]{Assumption}
\numberwithin{equation}{section}
\numberwithin{theorem}{section}
\begin{document}

\title{Weak solutions for mean field games with congestion}
\author{P. Jameson Graber}
\thanks{Research supported by NSF grant DMS-1303775.
The author wishes to thank Alain Bensoussan for his mentoring and support.}
\address{Naveen Jindal School of Management\\
The University of Texas at Dallas\\
800 West Campbell Rd, SM30\\
Richardson, TX 75080-3021\\
Phone: (972) 883-6249}
\email{pjg140130@utdallas.edu}
\dedicatory{Version: \today}

\maketitle

\begin{abstract}
We study the short-time existence and uniqueness of solutions to a coupled system of partial differential equations arising in mean field game theory.
It has the generic form
$$
\left\{
\begin{array}{c}
-\partial_t u - \Delta u + H(t,x,m,\nabla u) = f(t,x,m)  \\
\partial_t m - \Delta m - \mathrm{div} \left(m\nabla_p H(t,x,m,\nabla u)\right) = 0
\end{array}\right.
$$
plus initial-final and boundary conditions.
The novelty of the problem is that the Hamiltonian $H(t,x,m,p)$ may take such forms as $m^{-\alpha}|p|^r$ for some $\alpha \geq 0$ and $r > 1$.
Our main result is the existence of weak solutions for small times $T$ so long as $r$ is not too large, and uniqueness under additional constraints.
The main ingredient in the proof is an a priori estimate on solutions to the Fokker-Planck equation.
We also briefly consider existence and uniqueness of solutions to an optimal control problem related to mean field games.

\emph{Keywords: mean field games, Hamilton-Jacobi, Fokker-Planck, coupled systems, optimal control, nonlinear partial differential equations}

AMS Classification: 35K61
\end{abstract}

\section{Introduction}

Let $\Omega \subset \bb{R}^d$ be a bounded domain, let $T > 0$ be given, and set $Q  := \Omega \times (0,T).$
Our purpose is to study
\begin{equation}\label{mfg}\left\{
\begin{array}{cll}
\vspace{5pt}
(i) & -\partial_t u - \Delta u + H(t,x,m,\nabla u) = f(t,x,m) &\text{in}~~Q  \\
\vspace{5pt}
(ii) & \partial_t m - \Delta m - \mathrm{div} \left(m\nabla_p H(t,x,m,\nabla u)\right) = 0 &\text{in}~~Q  \\
(iii) & u(T,x) = g(x,m(T,x)),~~ m(0,x) = m_0(x) &\text{in}~~\Omega 
\end{array}\right.
\end{equation}
equipped with Dirichlet, Neumann, or periodic boundary conditions.
Here we assume that $H(t,x,m,p)$ has a structure of which the canonical example is
\begin{equation}
\label{canonical}
H(t,x,m,p) = \frac{|p|^r}{rm^\alpha}, ~~ r > 1, ~ \alpha \geq 0.
\end{equation}

The main result of this paper is the existence of short-time solutions for system \eqref{mfg} under the condition that $r$ is not ``too big" (see Section \ref{assumptions}).

System \eqref{mfg} represents a mean field game with a congestion term.
Heuristically, the function $u$ denotes the value function of a representative player whose objective is to minimize
\begin{equation}
\bb{E}\left[\int_t^T L(s,X(s),m(s,X(s)),v(s,X(s))) + f(s,X(s),m(s,X(s)))ds + g(X(T),m(T,X(T))) \right]
\end{equation}
over all trajectories $X = X(t)$ of the controlled stochastic differential equation
\begin{equation}
dX(s) = v(s,X(s))ds + \sqrt{2}dB(s), ~~ X(t) = x.
\end{equation}
The Lagrangian $L$ is defined by taking the Legendre transform of $H$ in the last coordinate, the canonical example being given by 
$$
L(t,x,m,v) = \frac{m^\alpha |v|^{r'}}{r'}, ~~ \frac{1}{r} + \frac{1}{r'} = 1.
$$
Here $m(t,x)$ is the density of participants in the game, while $\alpha$ can be taken as a congestion parameter.
Increasing $\alpha$ will increase the relative cost incurred by the representative player when moving at high speeds in areas of high population density.

The main result of this paper is the existence and uniqueness of weak solutions of \eqref{mfg} under general assumptions on the data.
Our main contribution is to show that $H(t,x,m,\nabla u)$ may have a highly singular dependence on $m$, provided that it does not blow up too quickly as $\nabla u$ becomes large.
This condition is made precise in Section \ref{assumptions} below.

To put this into perspective, we note that the vast majority of existence results for mean field games address only the case where $H$ does not depend on $m$ at all, that is, $H = H(t,x,\nabla u)$.
Using a priori estimates and compactness criteria, one can construct weak solutions using known results for quasilinear parabolic equations \cite{lasry07,porretta2013weak}.
The other option in this case is to view \eqref{mfg} as a condition of optimality for the control of the Fokker-Planck equation (or, by duality, of the Hamilton-Jacobi equation).
This was pointed out in \cite{lasry07} and exploited in \cite{cardaliaguet2013weak,graber2014optimal,cardaliaguet2014mean,cardaliaguet2014second} to obtain new existence results even for systems with degenerate diffusion (that is, replacing the Laplacian $-\Delta$ with an elliptic differential operator which is not strictly positive).
In such works, the loss of regularity is made up for by the fact that solutions can be constructed by passing to the limit on optimizing sequences, which, with the help of a priori estimates on Hamilton-Jacobi equations, can be shown to be compact in appropriately chosen functions spaces.
See \cite{cardaliaguet2014second} for details.

Up until now, very few authors have studied PDE systems of mean field games which did not fit this paradigm.
The motivation for the present work is a result of Gomes and Mitake in \cite{gomes2014existence}, where they prove the existence of classical solutions for a stationary version of \eqref{mfg} with the Hamiltonian given by the canonical example \eqref{canonical} with $r = 2$.
Their proof relied on a priori estimates which emerge from the specific structure they consider.
In particular, integration by parts yields a special cancellation which allows one to conclude that $1/m$ is bounded, after which everything else follows from classical methods for parabolic equations.

A system which resembles the mean field game \eqref{mfg} where $H$ has the form \eqref{canonical} appears in \cite{cardaliaguet2012geodesics}.
However, in that reference the problem is one of optimal control, and the PDE system studied there is a condition of optimality and not a game.
Such optimal control problems arise in transport theory \cite{dolbeault2009new} and also have applications to  traffic flow and congestion \cite{burger2013mean}.
It is interesting that the PDE systems for such control problems resemble \eqref{mfg}.
However, the fact that our case is not a control problem creates added difficulties which we must surmount.
See \cite{bensoussan13a} for related remarks on the comparison between mean field games and mean field optimal control problems.
We will briefly visit the mean field type control problem corresponding to \eqref{mfg} in Section \ref{section mean field control}.

Rather than considering only the canonical Hamiltonian given by \eqref{canonical}, our main results cover much more general Hamiltonians which may depend on $m$ in a highly singular way.
The difficulty we face when constructing a solution is showing compactness in the Hamiltonian term.
Classical methods for quasilinear parabolic equations (see \cite{porretta1999existence} and references therein) do not apply here, since $H(t,x,m,\nabla u)$ could blow up due to $m$ rather than $\nabla u$.
In this work we deal with this by finding conditions under which a solution to \eqref{mfg} will satisfy the a priori estimate
$$
\|m\|_\infty + \|1/m\|_\infty \leq C,
$$
which then allows the classical methods to go through.
Such an estimate can be proved for solutions of the Fokker-Planck equation provided that the vector-field generating the flow is sufficiently integrable.
This is the main reason for assuming that $H$ does not grow ``too fast" with respect to the variable $\nabla u~$: with this requirement, the energy estimates for mean field games yield that $\nabla_p H$ is sufficiently integrable.

Another work by Gomes and Voskanyan  \cite{gomes2015short} studies a  model similar to \eqref{mfg}.
There they consider the existence of smooth solutions, for Hamiltonians of the form \eqref{canonical} having any subquadratic growth in the gradient variable but for a sufficiently small exponent on the density: that is, any $1 < r < 2$ by for $\alpha$ sufficiently small.
The result holds, as in the present work, for sufficiently small times $T$.
These two works can be seen as complementary insofar as (i) the present work does not study bounded, smooth solutions but rather weak solutions, and (ii) the present work studies existence for $\alpha$ in any range (but with restrictions on $r$).

The author wishes to thank Diogo Gomes for helpful comments made during the preparation of this article.
 
To conclude this introduction, we outline the structure of the paper.
The section immediately following this introduction gives some notation, defines weak solutions and presents our main results, Theorems \ref{existence of weak solutions} and \ref{uniqueness}.
Section \ref{existence section} is devoted to proving the existence of solutions.
The largest part of this section is devoted to obtaining a priori estimates on the Fokker-Planck equation--see Proposition \ref{fokker planck bounds}.
Section \ref{uniqueness section} is devoted to proving uniqueness of solutions under certain conditions.
Finally, in Section \ref{section mean field control} we briefly examine the problem of mean field type control.
It is shown in particular that uniqueness of solutions for the control problem holds under much more general conditions than for mean field games.
The assumptions in play in the present work make it easier to construct solutions of the mean field type control problem; we can expect that in future work, much more general results will be proved using different variational techniques.

\section{Preliminaries}

\subsection{Notation and assumptions}
\label{assumptions}

We denote by $L^p(Q ) = L^p((0,T) \times \Omega)$ or just $L^p$ the Lebesgue space of $p$-integrable functions on $Q $, with norm $\|\cdot\|_{L^p}$ or simply $\|\cdot\|_p$.
We denote by $\nabla$ the $d$-dimensional spatial gradient.
Sobolev spaces will be labeled $W^{s,p}$, or $H^s$ when $p = 2$, with corresponding norms $\|\cdot\|_{W^{s,p}}$.
We will write $\|\cdot\|_{L^t_pL^q_x}$ for the norm in $L^p(0,T;L^q(\Omega))$, or more generally $\|\cdot\|_{W^{s,p}_tW^{r,q}_x}$ for the norm in $W^{s,p}(0,T;W^{r,q}(\Omega))$.
The space of continuous functions on $Q$ will be denoted $C(Q)$, while $C^s(Q)$ denotes the space of all functions with $s$ continuous derivatives.
Analogous notation applies for function spaces on $\Omega$.
If $X$ is any function space, then the space of all non-negative functions in $X$ will be denoted $X_+$.
Throughout this work the symbol $C$ will denote a constant depending only on the parameters given by the theorem or proposition being proved; its precise value may change from line to line.

We assume the following hypotheses on the data:
\begin{enumerate}
\item (Initial condition) $m_0 \in L^\infty(\Omega)$, $m_0 \geq c > 0$, and $\int_Q m_0 = 1$
\item (Coupling) $f(t,x,m)$ and $g(x,m)$ are measurable in $(t,x)$, continuous and nondecreasing in $m$, and bounded below.
Moreover,
\begin{equation} \label{fL and gL defined}
f_L(t,x) := \sup_{m \in [0,L]} f(t,x,m), ~~ g_L(x) := \sup_{m \in [0,L]} g(x,m)
\end{equation}
are both integrable for all $L > 0$.
\item (Hamiltonian) We assume that $H = H(t,x,m,p)$ is measurable in $(t,x)$, continuous in $m$, and both convex and continuously differentiable in $p$.
Moreover, we assume that for some constants $C> 0,\lambda \geq 0$,
\begin{align}
& H(t,x,m,p) \geq \frac{|p|^r}{C(m^\lambda+m^{-\lambda})} \label{H positive}\\
& |\nabla_p H(t,x,m,p)| \leq C(m^\lambda+m^{-\lambda})(1 + |p|^{r-1}) \label{DH bounded}\\
& \nabla_p H(t,x,m,p) \cdot p - rH(t,x,m,p) \geq -C \label{DH bounded below}
\end{align}
The exponent $\lambda$ appearing in four different places in \eqref{H positive}-\eqref{DH bounded} could in principle be four different exponents, but it is sufficient for our purposes to take $\lambda$ to be their maximum.
The main point is that $H$ and $\nabla_p H$ can blow up polynomially in $m$ or $m^{-1}$, that is for $m$ large or small.
\item (Growth of Hamiltonian) The exponent $r$ setting the growth rate of $H$ satisfies the condition
\begin{equation}
\label{growth rate of Hamiltonian}
r < \frac{d+2}{d+1}.
\end{equation}
\end{enumerate}
The canonical example presented in the introduction is $H(t,x,m,p) = m^{-\alpha}|p|^r$, which satisfies the above hypotheses with $\lambda = \alpha$.
However, we can allow much more general Hamiltonians, for instance those having the form
$$
H(t,x,m,p) = h(t,x,m)|p|^r
$$
where $h$ is continuous in $m$ and satisfies
$$
\epsilon(m^\lambda + m^{-\lambda})^{-1} \leq h(t,x,m) \leq C(m^\lambda + m^{-\lambda})
$$
for some positive constants $C$, $\epsilon$,  and $\lambda$.
Note that $\lambda$ may be large,
so that $h$ may be very large or quickly vanish when $m$ is either large or small.

\subsection{Definition of solutions}

We now define solutions of system \eqref{mfg}.
\begin{definition}
\label{definition of solutions}
A couple $(u,m) \in L^1(Q) \times L^1(Q)_+$ is a weak solution of \eqref{mfg} provided
\begin{enumerate}
\item[(i)] $m \in C([0,T];L^1(\Omega))$ and $g(\cdot,m(T)) \in L^1(\Omega)$;
\item[(ii)] $H(\cdot,\cdot,m,\nabla u), ~ (1+m)|\nabla_p H(\cdot,\cdot,m,\nabla u)|^2, ~ f(\cdot,\cdot,m) \in L^1(Q)$;
\item[(iii)] the equations hold in the sense of distributions,
\begin{equation}
\int_Q u (\phi_t - \Delta \phi) + \int_Q  H(t,x,m,\nabla u)\phi 
= \int_Q f(t,x,m)\phi + \int_{\Omega} g(x,m(T))\phi(T)
\end{equation}
for every $\phi \in C_c^\infty((0,T] \times \overline{\Omega})$ satisfying \eqref{test function boundary}, and
\begin{equation}
\int_Q m(-\phi_t -\Delta \phi) + \int_Q m\nabla_p H(t,x,m,\nabla u) \cdot \nabla \phi = \int_{\Omega} m_0 \phi(0)
\end{equation}
for every $\phi \in C_c^\infty([0,T) \times \overline{\Omega})$ satisfying \eqref{test function boundary}, where the boundary conditions may be one of the following:
\begin{equation}
\label{test function boundary}
\begin{array}{ll}
\text{periodic:}& \phi ~\text{is}~\bb{Z}^d-\text{periodic in}~x,\\
\text{Dirichlet:}& \phi = 0 ~\text{on}~ \Sigma := (0,T) \times \partial \Omega, ~\text{or}\\
\text{Neumann:}& \frac{\partial \phi}{\partial \nu} = 0 ~\text{on}~ \Sigma := (0,T) \times \partial \Omega.
\end{array}
\end{equation}
\end{enumerate}
\end{definition}
A special remark is in order concerning the integrability condition $(1+m)|\nabla_p H(m,\nabla u)|^2 \in L^1(Q)$.
The Fokker-Planck equation in \eqref{mfg} would still have a meaning in the sense of distributions given the weaker condition $m\nabla_p H(m,\nabla u) \in L^1(Q)$.
However, the stronger condition allows us to prove that weak solutions enjoy more regularity than given by the definition.
See Proposition \ref{fokker planck bounds weak}.
Such extra regularity makes possible our proof of uniqueness given in Section \ref{uniqueness section}.

We should also note that $|\nabla_p H(m,\nabla u)|^2 \in L^1(Q)$ does not follow from $H(m,\nabla u) \in L^1(Q)$ by the assumptions on $H$.
This is in contrast with the case where $H$ does not depend on $m$.
Indeed, if we removed the $m$ dependence from \eqref{DH bounded}, then $H(m,\nabla u) \in L^1(Q)$ would imply $\nabla_p H(m,\nabla u) \in L^{r/(r-1)}(Q) \subset L^2(Q)$.
In actual fact, \eqref{DH bounded} does not allow us to deduce $\nabla_p H(m,\nabla u) \in  L^2(Q)$ unless we already know that $m$ and $m^{-1}$ are bounded.

\subsection{Main results}

The following two theorems constitute the main results of this paper.
\begin{theorem}[Existence] \label{existence of weak solutions}
Assume the hypotheses given in Section \ref{assumptions}.
There exists a constant $T_0 > 0$ such that if $T \leq T_0$, then there exists a weak solution $(u,m)$ of \eqref{mfg}, defined in Section \ref{definition of solutions} below, such that
\begin{enumerate}
\item $u,m \in C([0,T];L^1(\Omega))$ with $u(T) = g(m(T))$ and $m(0) = m_0$, $m \geq 0$;
\item $m \in L^2(0,T;H^1(\Omega)) \cap L^\infty(Q),$ $1/m \in L^\infty(Q)$, and $\partial_t m \in L^r(0,T;W^{-1,r}(\Omega))$;
\item $u$ is bounded below.
\end{enumerate}
Furthermore, suppose that $f_L$ and $g_L$, defined in \ref{fL and gL defined}, are bounded for all $L > 0$.
Then $u$ is bounded and locally H\"older continuous in $Q$.
\end{theorem}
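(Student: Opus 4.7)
The plan is to construct $(u,m)$ as the limit of a regularized sequence combined with a Schauder fixed point. For each $n$, I would replace $H, f, g$ by approximations $H_n, f_n, g_n$ in which the $m$-dependence is truncated to a compact subinterval of $(0,\infty)$ (e.g.\ replacing $m$ in the $m$-slot by $\max(1/n,\min(n,m))$) and smoothed in $(t,x)$. For fixed $n$ the system is no longer singular, and the map $m \mapsto \tilde m$ defined by solving the HJB for $u$ with $m$ frozen in the arguments and then solving the linear Fokker--Planck equation with drift $\nabla_p H_n(t,x,m,\nabla u)$ is continuous and compact on a suitable convex set of densities; Schauder then produces a smooth solution $(u_n,m_n)$, with the HJB step handled by the quasilinear parabolic theory of \cite{porretta1999existence} since the $m$-coefficients of $H_n$ are now bounded.

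\textbf{Uniform estimates.} Testing the HJB against $m_n$ and the Fokker--Planck equation against $u_n$, integrating by parts, and subtracting yields the standard mean field game energy identity. Using \eqref{DH bounded below} this gives
$$(r-1)\int_Q m_n H_n(t,x,m_n,\nabla u_n)\,dx\,dt \;\leq\; \int_\Omega m_n(T)g_n(\cdot,m_n(T)) - \int_\Omega m_0 u_n(0) - \int_Q m_n f_n + C,$$
whose right hand side is controlled by the lower bounds on $f,g$, the $L^1$ bounds on $f_L, g_L$, and the $L^1$ conservation of mass for $m_n$. Combined with \eqref{H positive} and \eqref{DH bounded}, this supplies integrability of the drift $\nabla_p H_n(t,x,m_n,\nabla u_n)$, and the growth restriction $r<(d+2)/(d+1)$ in \eqref{growth rate of Hamiltonian} is designed precisely so that this integrability falls within the range covered by Proposition \ref{fokker planck bounds}. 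Provided $T \leq T_0$, that proposition yields the decisive bound
$$\|m_n\|_\infty + \|1/m_n\|_\infty \leq C,$$
uniformly in $n$. This is the main obstacle: once it is secured, the Hamiltonian loses its singular character and the remainder of the argument is standard quasilinear machinery.

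\textbf{Passing to the limit and additional regularity.} With $m_n$ and $1/m_n$ uniformly bounded, the bounds \eqref{H positive}--\eqref{DH bounded} become nondegenerate in $m$, and parabolic regularity for the Fokker--Planck equation with bounded drift yields $m_n \in L^2(0,T;H^1)$ and $\partial_t m_n \in L^r(0,T;W^{-1,r})$ uniformly in $n$; Aubin--Lions then gives compactness of $m_n$ in $C([0,T];L^1)$. On the HJB side, the same $L^\infty$ control on $m$-coefficients together with the subquadratic growth in $\nabla u$ allows one to extract a subsequence with a.e.\ convergence of $\nabla u_n$ via the methods of \cite{porretta1999existence}, which is enough to pass to the limit in $H_n(t,x,m_n,\nabla u_n)$ and in $m_n \nabla_p H_n(t,x,m_n,\nabla u_n)$, and to verify the integrability requirements in Definition \ref{definition of solutions}. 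The lower bound on $u$ follows by comparison with the solution of the linear heat equation obtained by replacing $f,g$ by their infima and dropping $H \geq 0$. Finally, if $f_L, g_L$ are bounded, then after the $L^\infty$ bounds on $m, 1/m$ are in hand the HJB has bounded right hand side and uniformly nondegenerate $m$-coefficients, so De Giorgi--Nash--Moser yields the claimed local Hölder continuity of $u$.
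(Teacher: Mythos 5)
Your proposal follows essentially the same route as the paper: regularize the Hamiltonian, produce approximate solutions by a Schauder fixed point, derive the mean field game energy identity, feed the resulting weighted integrability of the drift into Proposition \ref{fokker planck bounds} to obtain the uniform bounds on $\|m\|_\infty$ and $\|1/m\|_\infty$ for $T\leq T_0$, and conclude by compactness. The one notable difference is that the paper's regularization $H_\epsilon$ also tames the growth in $p$ (making $\nabla_p H_\epsilon$ globally bounded so the fixed-point step is elementary, at the cost of re-verifying the structural inequality \eqref{gradient H estimate} in the form \eqref{H epsilon structure}), whereas your $m$-only truncation skips that verification but obliges you to run Schauder with a drift of growth $|p|^{r-1}$, which needs the gradient estimates of quasilinear parabolic theory already at the approximate level.
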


In order to formulate our uniqueness result, we will need an additional hypothesis on the structure of the Hamiltonian.
\begin{assumption} \label{uniqueness assumption}
We assume, in addition to the assumptions in Section \ref{assumptions}, that $H$ is $C^2$ in $p$ and that for all $m,\tilde{m} > 0$ and $p,\tilde{p} \in \bb{R}^d$,
\begin{equation} \label{uniqueness hamiltonian}
-\partial_m H(t,x,m,p)\tilde{m}^2 + m\nabla_p^2 H(t,x,m,p)(\tilde{p},\tilde{p})
 + m \tilde{m}\tilde{p}\cdot \partial_m \nabla_p H(t,x,m,p) > 0.
\end{equation}
\end{assumption}
A more concrete structure satisfying this assumption is given by
\begin{equation} \label{simple hamiltonian}
H(t,x,m,p) = h(t,x,m)|p|^r
\end{equation}
for some function $h(t,x,m)$ which is measurable in $(t,x)$, continuously differentiable in $m$, and satisfies
\begin{equation} \label{h growth}
0 \leq -\partial_m h(t,x,m)m < \frac{4(r-1)}{r}h(t,x,m) ~~~~ \forall m \geq 0.
\end{equation}
Note that if we take the canonical model from the introduction, then \eqref{h growth} implies
$$
h(t,x,m) = m^{-\alpha} ~~ \Rightarrow ~~ 0\leq \alpha < 4(r-1)/r.
$$

\begin{theorem}[Uniqueness]
\label{uniqueness}
In addition to the hypotheses given in Section \ref{assumptions}, suppose that $H$ satisfies Assumption \ref{uniqueness assumption}.
Then the weak solution provided by Theorem \ref{existence of weak solutions} is unique.
\end{theorem}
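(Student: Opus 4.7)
The plan is to carry out a Lasry--Lions style monotonicity argument, adapted to the case where $H$ depends on $m$. Given two weak solutions $(u_1,m_1)$ and $(u_2,m_2)$ of \eqref{mfg} provided by Theorem \ref{existence of weak solutions}, the starting point is to compute $\frac{d}{dt}\int_\Omega (u_1-u_2)(m_1-m_2)\,dx$ from equations \eqref{mfg}(i)--(ii), integrate in $t$ from $0$ to $T$, and integrate by parts in $x$ (all boundary contributions vanish under any of the conditions in \eqref{test function boundary}). Combined with the common initial datum $m_1(0)=m_2(0)=m_0$ and the terminal condition $u_i(T)=g(\cdot,m_i(T))$, this yields an identity of the form $A+B=-\int_0^T\!\!\int_\Omega \mathcal{E}\,dx\,dt$, where $A:=\int_\Omega (g(x,m_1(T))-g(x,m_2(T)))(m_1(T)-m_2(T))\,dx$, $B:=\int_0^T\!\!\int_\Omega (f(m_1)-f(m_2))(m_1-m_2)$, and
\begin{equation*}
\mathcal{E} := (p_1-p_2)\cdot(m_1\nabla_p H(m_1,p_1)-m_2\nabla_p H(m_2,p_2)) - (H(m_1,p_1)-H(m_2,p_2))(m_1-m_2),
\end{equation*}
with $p_i:=\nabla u_i$. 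Monotonicity of $f$ and $g$ in $m$ makes $A+B\geq 0$, so the proof reduces to showing $\mathcal{E}\geq 0$ pointwise, with strict positivity unless $(m_1,p_1)=(m_2,p_2)$.

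The algebraic heart is a line-integral parameterization along the segment joining $(m_2,p_2)$ to $(m_1,p_1)$. Setting $\bar m(s):=(1-s)m_2+sm_1$, $\bar p(s):=(1-s)p_2+sp_1$, $\tilde m:=m_1-m_2$, $\tilde p:=p_1-p_2$, and writing each difference inside $\mathcal{E}$ as $\int_0^1 \frac{d}{ds}(\cdot)\,ds$, the chain rule (valid since $H$ is $C^2$ in $p$ and $C^1$ in $m$) yields, after cancellation of the cross-terms $\tilde m\tilde p\cdot\nabla_p H$,
\begin{equation*}
\mathcal{E}(t,x) = \int_0^1\Bigl\{-\partial_m H(\bar m,\bar p)\tilde m^2 + \bar m\,\nabla_p^2 H(\bar m,\bar p)(\tilde p,\tilde p) + \bar m\,\tilde m\,\tilde p\cdot\partial_m\nabla_p H(\bar m,\bar p)\Bigr\}\,ds.
\end{equation*}
The integrand is exactly the quantity that Assumption \ref{uniqueness assumption} requires to be strictly positive whenever $(\tilde m,\tilde p)\neq 0$, with $m$ replaced by $\bar m>0$ (which is ensured by the $1/m$ bound from Theorem \ref{existence of weak solutions}). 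Hence $\mathcal{E}\geq 0$; combined with $A+B\geq 0$ and $A+B=-\int\mathcal{E}$ this forces both to vanish, so $m_1=m_2$ and $\nabla u_1=\nabla u_2$ almost everywhere on $Q$. Once $m_1=m_2$, the difference $u_1-u_2$ solves the linear heat equation with zero terminal data (the $H$ and $f$ terms in \eqref{mfg}(i) now coincide), so $u_1\equiv u_2$ by standard parabolic uniqueness.

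I expect the main obstacle to be the rigorous justification of the cross-energy identity, since the raw definition of weak solution does not give $u_1-u_2$ enough regularity to serve as a test function in the Fokker--Planck equations for the $m_i$. To handle this I would exploit the stronger regularity recorded in Theorem \ref{existence of weak solutions}: $m_i,1/m_i\in L^\infty(Q)$, $m_i\in L^2(0,T;H^1(\Omega))$, $\partial_t m_i\in L^r(0,T;W^{-1,r}(\Omega))$, and $u_i$ bounded and locally H\"older continuous. The two-sided $L^\infty$ bound on $m_i$ together with \eqref{H positive} and the integrability $H(\cdot,\cdot,m_i,\nabla u_i)\in L^1(Q)$ give $\nabla u_i\in L^r(Q)$, while \eqref{DH bounded} gives $m_i\nabla_p H(m_i,\nabla u_i)\in L^{r'}(Q)$, so by H\"older's inequality the pairing $\int_Q\nabla(u_1-u_2)\cdot(m_1\nabla_p H(m_1,\nabla u_1)-m_2\nabla_p H(m_2,\nabla u_2))$ is absolutely convergent. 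A standard time-space mollification of $(u_i,m_i)$, combined with these integrability bounds and continuity of $\partial_m H$, $\nabla_p^2 H$, $\partial_m\nabla_p H$, then permits passage to the limit in the regularized identity and legitimizes the algebraic argument above.
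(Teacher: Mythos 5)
Your core argument is exactly the paper's: the Lasry--Lions cross-duality identity reduces uniqueness to the pointwise inequality $\mathcal{E}\geq 0$, and your line-integral parameterization of $\mathcal{E}$ along the segment from $(m_2,p_2)$ to $(m_1,p_1)$, producing precisely the quadratic form in Assumption \ref{uniqueness assumption} as the integrand, is word for word the paper's Lemma \ref{convexity}. The conclusion from $A+B+\int_Q\mathcal{E}=0$ via monotonicity of $f,g$ is also identical. So the mathematical content matches; the differences are in the two technical points you defer.

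First, the justification of the identity. The paper does not mollify: it renormalizes the Hamilton--Jacobi equation with the truncations $S_k(u_i)$ of \eqref{S_N} and tests the resulting equation against $m_1-m_2$, then lets $k\to\infty$. The reason this detour is needed is an integrability mismatch that your sketch does not confront: one only has $\nabla u_i\in L^r(Q)$ with $r<\tfrac{d+2}{d+1}<2$, while $\nabla m_j\in L^2(Q)$, so the Laplacian cross-terms $\int_Q\nabla u_i\cdot\nabla m_j$ arising in Green's identity are not absolutely convergent, and $u_1-u_2$ is not an admissible test function for the Fokker--Planck equations. The truncations fix this because $\nabla S_k(u_i)\in L^2$ (Boccardo--Gallou\"et type estimates for the HJ equation with $L^1$ data), and it is exactly here that the definition's requirement $(1+m)\lvert\nabla_pH(m,\nabla u)\rvert^2\in L^1$ earns its keep, making $m\nabla_pH$ pairable with $\nabla S_k(u)$. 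Your observation that $\int\nabla(u_1-u_2)\cdot(m_1\nabla_pH_1-m_2\nabla_pH_2)$ converges by H\"older is correct but addresses only the final identity, not the intermediate pairings a mollification argument must control; a plain space-time mollification would also generate commutators with $H(m,\nabla u)$ near the parabolic boundary that you would have to estimate.

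Second, scope. You take two solutions ``provided by Theorem \ref{existence of weak solutions}'', i.e., already endowed with $m_i,1/m_i\in L^\infty$ and $m_i\in L^2(0,T;H^1)$. The paper's Section \ref{uniqueness section} is explicit that the goal is uniqueness of weak solutions \emph{without restricting their definition}; this is why it first proves Proposition \ref{fokker planck bounds weak}, extending the $L^\infty$ and $H^1$ bounds to arbitrary weak solutions of the Fokker--Planck equation via the equivalence with renormalized solutions. If you only compare solutions that already carry the extra regularity, you prove a weaker statement (uniqueness within a regular subclass) than the one the paper establishes. Incorporating Proposition \ref{fokker planck bounds weak}, or an equivalent bootstrap from Definition \ref{definition of solutions} alone, is therefore a necessary ingredient, not an optional convenience.
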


\section{Existence}
\label{existence section}

The proof of Theorem \ref{existence of weak solutions} is based on a priori estimates and compactness.
This section is divided into three parts, the first two of which address a priori estimates.
First, in Proposition \ref{fokker planck bounds}, we prove that solutions of the Fokker-Planck equation are a priori bounded both from above and away from zero, given a sufficient integrability condition on the vector field.
The other a priori estimates are standard for all mean field games.
Combined with Proposition \ref{fokker planck bounds} and known compactness results, we prove the existence of solutions to \eqref{mfg}.

\subsection{A priori estimates on the Fokker-Planck equation}

Consider the equation
\begin{equation} \label{fokker planck}
\left\{
\begin{array}{ll}
\partial_t m - \Delta m - \nabla \cdot (bm) = 0 & \text{in}~Q, \\
m(0) = m_0 & \text{in}~\Omega
\end{array}\right.
\end{equation}
with one of the following boundary conditions:
\begin{equation}
\label{boundary conditions fokker planck}
\begin{array}{ll}
\text{periodic:}& \Omega = \bb{T}^d, ~~ b,m ~\text{are}~\bb{Z}^d-\text{periodic in}~x,\\
\text{Dirichlet:}& m = 0 ~\text{on}~ \Sigma := (0,T) \times \partial \Omega, ~\text{or}\\
\text{Neumann:}& \frac{\partial m}{\partial \nu} + mb \cdot \nu = 0 ~\text{on}~ \Sigma := (0,T) \times \partial \Omega
\end{array}
\end{equation}
where $\nu$ is the outward unit normal vector on $\partial \Omega$.
If $b \in L^2(Q)$, we say that $m \in L^2(0,T;H^1(\Omega)) \cap L^\infty(Q)_+$ is a solution of \eqref{fokker planck}-\eqref{boundary conditions fokker planck} provided that
$$
\int_Q -m\partial_t \phi + \nabla m \cdot \nabla \phi + m b \cdot \nabla \phi = \int_{\Omega} m_0 \phi(0)
$$
for all test functions $\phi$ satisfying the  boundary conditions from \eqref{test function boundary} corresponding to \eqref{boundary conditions fokker planck}.

\begin{proposition} \label{fokker planck bounds}
Let $m_0 \in C(\Omega)$ be such that $m_0 > 0$, and let $b \in L^{2}(Q)$.
Suppose $m \in L^2(0,T;H^1(\Omega)) \cap L^\infty(Q)_+$ is a solution of \eqref{fokker planck}-\eqref{boundary conditions fokker planck}.
Suppose further that there exist non-negative measurable functions $b_1,\ldots,b_N$ and real constants $\beta_1,\ldots,\beta_N$ such that
\begin{equation} \label{b estimate}
|b| \leq \sum_{i=1}^N b_i, ~~~ m^{\beta_k}b_k \in L^{r/(r-1)} ~~ \forall ~k =1,\ldots,N. 
\end{equation}
Assume also that \eqref{growth rate of Hamiltonian} is satisfied.
Then there exist constants $T_0 > 0$,
$$
C(\|m_0\|_\infty,\{\|m^{\beta_k} b_k\|_{r/(r-1)}\}_{k=1}^N) 
~~
\text{and}
~~
C(\|m_0^{-1}\|_\infty,\{\|m^{\beta_k} b_k\|_{r/(r-1)}\}_{k=1}^N)
$$
such that if $T \leq T_0$, then
\begin{align} 
\label{m infinity esimate}
\sup_{t \in [0,T]}\left\|m(t)\right\|_\infty &\leq C(\|m_0\|_\infty,\{\|m^{\beta_k} b_k\|_{r/(r-1)}\}_{k=1}^N), \\
\label{1 over m estimate}
\sup_{t \in [0,T]}\left\|\frac{1}{m(t)} \right\|_\infty &\leq C(\|m_0^{-1}\|_\infty,\{\|m^{\beta_k} b_k\|_{r/(r-1)}\}_{k=1}^N).
\end{align}
Moreover, from \eqref{m infinity esimate} and \eqref{1 over m estimate} we deduce
\begin{equation}
\label{H^1 estimate}
\|\partial_t m\|_{L^{2}(0,T;H^{-1}(\Omega))} + \|m\|_{L^2(0,T;H^1(\Omega))} \leq C(\|m_0\|_\infty,\{\|m^{\beta_k} b_k\|_{r/(r-1)}\}_{k=1}^N).
\end{equation}
\end{proposition}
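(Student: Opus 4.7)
The plan is to prove \eqref{m infinity esimate} and \eqref{1 over m estimate} by two parallel Moser-type iterations, one for $m$ and one for $1/m$; the $H^1$-estimate \eqref{H^1 estimate} will then follow by a routine energy estimate against $m$ itself.

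For the upper bound I would test the weak form of \eqref{fokker planck} against a bounded truncation of $m^{p-1}$ (with $p\geq 2$) and pass to the limit, arriving at
\begin{equation*}
\sup_{t\leq T}\int_\Omega m(t)^p + \int_Q |\nabla m^{p/2}|^2 \;\leq\; \int_\Omega m_0^p + C p \sum_k \int_Q b_k\, m^{p-1}|\nabla m|.
\end{equation*}
Writing $b_k = m^{-\beta_k}(m^{\beta_k} b_k)$ and using Young's inequality to absorb a small fraction of $\|\nabla m^{p/2}\|_2^2$, the last term becomes $C_p \sum_k \|m^{\beta_k}b_k\|_{r'}^2 \|m\|_{L^{q_k(p)}(Q)}^{p-2\beta_k}$, where $q_k(p) = (p-2\beta_k)r'/(r'-2)$ comes from Hölder's inequality with conjugate exponent $r'/2$. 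The parabolic Gagliardo-Nirenberg embedding $L^\infty_t L^2_x \cap L^2_t H^1_x \hookrightarrow L^{2(d+2)/d}(Q)$ applied to $m^{p/2}$ upgrades the left-hand side to control of $\|m\|_{L^{p(d+2)/d}(Q)}^{p(d+2)/d}$. The ratio $q_k(p)/(p(d+2)/d)$ tends as $p\to\infty$ to $\tfrac{d\,r'}{(d+2)(r'-2)}$, which is strictly less than $1$ precisely when $r' > d+2$, i.e., exactly under hypothesis \eqref{growth rate of Hamiltonian}. Iterating along a geometric sequence $p_{n+1}=\tfrac{d+2}{d}p_n$, interpolating $\|m\|_{L^{q_k(p_n)}(Q)}$ against the previous step's bound to extract a factor $T^\theta$ ($\theta>0$), and taking $T \leq T_0$ small enough to dominate the $p$-dependent constants, the recursion converges and yields \eqref{m infinity esimate}.

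The same procedure applies to $v = 1/m$. A direct calculation (justified via the regularization $v_\varepsilon = 1/(m+\varepsilon)$) shows that $v$ formally satisfies
\begin{equation*}
\partial_t v - \Delta v + \tfrac{2}{v}|\nabla v|^2 - b\cdot\nabla v + v\,\nabla\cdot b = 0.
\end{equation*}
Testing against $v^{p-1}$ and integrating the $\nabla\cdot b$ term by parts, the drift terms combine into
\begin{equation*}
\tfrac{1}{p}\tfrac{d}{dt}\int v^p + (p+1)\int v^{p-2}|\nabla v|^2 = (p+1)\int v^{p-1}\,b\cdot\nabla v,
\end{equation*}
with the diffusion term keeping its favorable sign. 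Since $v^{-\beta_k}b_k = m^{\beta_k}b_k \in L^{r'}$, hypothesis \eqref{b estimate} transfers verbatim to $v$, and the same Moser iteration produces \eqref{1 over m estimate}. With $m$ and $1/m$ uniformly bounded and $b \in L^2$, testing \eqref{fokker planck} against $m$ gives $\|\nabla m\|_{L^2(Q)}$, and $\|\partial_t m\|_{L^2(0,T;H^{-1})}$ follows by duality, establishing \eqref{H^1 estimate}.

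The main obstacle is making the iteration close: because the drift is weighted by arbitrary powers $m^{\beta_k}$, textbook Moser iteration does not apply directly, and the $p$-dependence of the constants must be tracked carefully. The strict inequality $r < (d+2)/(d+1)$ supplies precisely the uniform gain factor needed at each step, while the small-time condition $T \leq T_0$ allows the $p$-dependent multiplicative constants to be absorbed into the $T^\theta$ factor produced by the interpolation.
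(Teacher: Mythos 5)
Your strategy coincides with the paper's: test the equation against powers $m^{\pm q-1}$, absorb the drift into the gradient term by Young's inequality, apply H\"older with exponent $r'/2$ to exploit $m^{\beta_k}b_k\in L^{r'}$, and run a Moser iteration whose per-step gain comes from the parabolic Sobolev embedding; your criterion $r'>d+2\Leftrightarrow r<\tfrac{d+2}{d+1}$ is exactly the paper's condition that $s=(1+2/d)/r^*>1$, where $r^*=r/(2-r)=r'/(r'-2)$. But two things need repair. The lesser one: the sequence $p_{n+1}=\tfrac{d+2}{d}p_n$ does not close the recursion. Step $n+1$ needs $\|m\|_{L^{q_k(p_{n+1})}}$ with $q_k(p_{n+1})\approx p_{n+1}r^*$, while step $n$ only controls $\|m\|_{L^{p_n(d+2)/d}}=\|m\|_{L^{p_{n+1}}}$; since $r^*>1$ the required exponent exceeds the available one at every step. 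The correct ratio is $p_{n+1}/p_n=\tfrac{d+2}{d\,r^*}$ (the reciprocal of the limit ratio you computed), which is $>1$ precisely under \eqref{growth rate of Hamiltonian}; the paper writes this as $q_{n+1}=q_n s-\gamma$.

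The substantive gap is the base case. Any such iteration must start from a quantitative bound on $\|m^{\mp1}\|_{L^{q_0}}$ for some large finite $q_0$, depending only on $\|m_0^{\pm1}\|_\infty$ and $\{\|m^{\beta_k}b_k\|_{r'}\}$; mass conservation gives only $L^1$, and the hypothesis $m\in L^\infty(Q)$ is qualitative and cannot be used. This is where $T\le T_0$ genuinely enters: the paper interpolates the right-hand term $\iint m^{\mp r^*(q+\gamma)}$ between the $L^q_x$ and $L^{dq/(d-2)}_x$ norms already controlled on the left, absorbs part of it, and is left with a superlinear Gronwall inequality $\dot H\le C(1+H)^\beta$ with $\beta>1$, whose solution stays bounded only up to a finite blow-up time $t_*$; $T_0$ is taken below $t_*$. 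Your proposal instead invokes small $T$ to ``dominate the $p$-dependent constants,'' but those constants ($\sim p_n^2$) are harmless in any Moser iteration — they enter the final bound as the convergent product $\prod_n (Cp_n^2)^{1/p_n}$ regardless of $T$ — and a fixed factor $T^\theta$ cannot dominate $Cp_n^2$ uniformly in $n$ in any case. Without the superlinear-Gronwall (or some equivalent) argument producing the starting bound, the iteration has nothing to feed on, and the role of $T_0$ in the statement is unaccounted for.
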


\begin{proof}
First we will prove \eqref{1 over m estimate}. Assume that $m$ is strictly positive; we will remove this assumption at the end of the proof.
Multiply \eqref{fokker planck} by $m^{-q-1}$ and integrate by parts, then use Young's inequality to estimate
\begin{equation} \label{first m estimate}
\int_\Omega m(t)^{-q} + \frac{q(q+1)}{2}\int_0^t \int_\Omega m^{-q-2}|\nabla m|^2 \leq \int_\Omega m_0^{-q} + \frac{q(q+1)}{2}\int_0^t \int_\Omega m^{-q}|b|^2.
\end{equation}
By the Sobolev embedding theorem we have
\begin{multline}
\int_0^T\|m^{-1}(t)\|_{dq/(d-2)}^q dt \leq C\int_0^T\|\nabla (m ^{-q/2}(t))\|_{2}^2 + \|m ^{-q/2}(t)\|_{2}^2 dt\\
 = C\frac{q^2}{4}\int_0^T \int_\Omega m ^{-q-2}|\nabla m|^2 ~dx dt + C\int_0^T \|m ^{-1}(t)\|_{q}^q dt.
\end{multline}
With the previous estimate this implies
\begin{multline} \label{holder part of moser}
\|m ^{-1}\|_{L^\infty_t L^q_x}^q + \|m ^{-1}\|_{L^q_tL^{dq/(d-2)}_x}^q \leq C\|m_0^{-1}\|_\infty^{q} + Cq^2\int_0^T \int_\Omega m ^{-q}|b|^{2}\\
\leq C\|m_0^{-1}\|_\infty^{q} + CNq^2\sum_{i=1}^N\int_0^T \int_\Omega m ^{-q}b_i^{2}\\
\leq C\|m_0^{-1}\|_\infty^{q} + CNq^2\sum_{i=1}^N\|m^{\beta_i} b_i\|_{r/(r-1)}^2\|m ^{-1}\|_{r^*(q+\gamma_i)}^{q+\gamma_i}
\end{multline}
where $r^* := r/(2-r)$, $\gamma_i := \max\{2\beta_i(r-1)/r,0\}$, and where $C$ is some constant depending only on the domain $\Omega$ and the final time $T$.

Noting that
\begin{equation}
\|m^{-1}\|_{q(1+2/d)} \leq \|m^{-1}\|_{L^\infty_t L^q_x}^{2/(d+2)}\|m^{-1}\|_{L^q_tL^{dq/(d-2)}_x}^{d/(d+2)},
\end{equation}
we can deduce
\begin{equation} \label{Moser iterative formula}
\|m^{-1}\|_{q(1+2/d)} \leq K^{1/q}q^{2/q}\max\{1,\|m_0^{-1}\|_\infty,\|m^{-1}\|_{r^*(q+\gamma)}\}^{1+\gamma/q},
\end{equation}
where $\gamma := \max\{\gamma_1,\ldots,\gamma_N\}$ and $K$ depends only on $\{\|m^{\beta_k} b_k\|_{r/(r-1)}\}_{k=1}^N$ and constant parameters.

In order to use the Moser iteration method, we need to show that for some $q$ large enough, $\|m^{-1}\|_q$ can be bounded by a constant depending only on the parameters as do $C$ and $K$.
Let us observe that \eqref{first m estimate} can be used to deduce
\begin{equation} \label{first iteration}
\int_\Omega m^{-q}(t) dx + \int_0^t \|m^{-1}(s)\|_{L_x^{dq/(d-2)}}^q ds \leq C + \int_0^t \int_{\Omega} m^{-r^*(q+\gamma)} dx ds,
\end{equation}
where again $C$ depends only on the parameters.
We wish to bound the right-hand side.
Choose $\lambda \in (0,1)$ such that
$$
q\lambda + (1-\lambda)dq/(d-2) = (q+\gamma)r^*,
$$
that is,
$$
\lambda = \frac{dq - (d-2)r^*(q+\gamma)}{2q}.
$$
We also set
$$
\mu = \frac{d}{d-2}(1-\lambda) = \frac{d(r^*-1+\gamma r^*/q)}{2}.
$$
Note that $\mu \to \frac{d}{2}(r^*-1) < 1$ as $q \to \infty$, hence for $q$ large enough we have $\mu < 1$ as well.
So we estimate
\begin{multline}
\int_{\Omega} m^{-r^*(q+\gamma)}(t) dx \leq \left(\int_{\Omega} m^{-q}(t) dx\right)^\lambda \left(\int_{\Omega} m^{-dq/(d-2)}(t) dx\right)^{1-\lambda} \\
= \left(\int_{\Omega} m^{q}(t) dx\right)^\lambda \|m^{-1}(t)\|_{dq/(d-2)}^{q\mu}
\leq C \left(\int_{\Omega} m^{q}(t) dx\right)^{\lambda/(1-\mu)} +  \|m^{-1}(t)\|_{dq/(d-2)}^{q}
\end{multline}
where in this line $C$ depends only on $\mu$.
Plugging this into \eqref{first iteration}, we see that
\begin{equation}
\int_{\Omega} m^{-q}(t)dx \leq C + C\int_0^t \left(\int_{\Omega} m^{q}(s) dx\right)^{\lambda/(1-\mu)} ds.
\end{equation}
Set $\beta = \lambda/(1-\mu)$ and $H(t) = \int_0^t \left(\int_{\Omega} m^{q}(s) dx\right)^{\lambda/(1-\mu)} ds$.
This implies the ordinary differential inequality
$$
\dot{H}(t) \leq C(1+H(t))^\beta, ~~ H(0) = 0.
$$
In particular, we have
$$
H(t) \leq \frac{1}{1-C(t_*-t)^{\beta-1}} ~~ \forall ~ t < t_* := \frac{1}{C(\beta-1)}.
$$
Hence if $T$ is small enough, we have that $H(t)$ and thus also $\|m^{-1}(t)\|_q$ is bounded by a constant for all $t \in [0,T]$.
This will allow us to start the Moser iteration.

We now define sequences $\{q_n\}$ and $\{z_n\}$ as follows.
Set $s := (1+2/d)/r^*$; by the assumption $r < 1 + (d+1)^{-1}$ we have $s > 1$. 
Fix some
\begin{equation}
1 < u < s ~\text{and}~ q_0 > \frac{\gamma}{s-u}
\end{equation}
where $q_0$ is large enough so that $\|m^{-1}\|_{r^*(q_0+\gamma)} \leq C$.
Now set
\begin{equation}
q_{n+1} = q_ns - \gamma, ~~ z_n = \max\{1,\|m_0^{-1}\|_{\infty},\|m^{-1}\|_{r^*(q_n+\gamma)}\}, ~~ \forall ~n \geq 0.
\end{equation}
Note that $\{q_n\}$ grows exponentially:
\begin{equation}
q_0u^n \leq q_n \leq q_0 s^n.
\end{equation}
Apply \eqref{Moser iterative formula} to get
\begin{equation}
z_{n+1} \leq K^{1/q_n}q_n^{2/q_n}z_n^{1+\gamma/q_n}.
\end{equation}
Using the bounds on $q_n$ this implies
\begin{equation} \label{inductive step}
\log(z_{n+1}) \leq \frac{\log(Kq_0^2s^{2n})}{q_0u^n} + \left(1+\frac{\gamma}{q_0u^n}\right)\log(z_n)
\leq \frac{A(n+1)}{u^n} + \left(1+\frac{B}{u^n}\right)\log(z_n)
\end{equation}
where
$$
A = \frac{\max\{\log(Kq_0^2),\log(s^{2})\}}{q_0}, ~~ B = \frac{\gamma}{q_0}.
$$
By induction, one deduces from \eqref{inductive step} that
\begin{equation} \label{log zn estimate}
\log(z_n) \leq A\left(\sum_{k=0}^{n-1} \frac{k+1}{r^k} \right) \prod_{k=0}^{n-1}\left(1+\frac{B}{r^k}\right)\max\{\log(z_0),1\},
\end{equation}
which converges because $r > 1$.
Setting $z_\infty := \limsup_{n\to \infty} z_n$ we can conclude that
\begin{equation}
\|m^{-1}\|_\infty \leq z_\infty < +\infty
\end{equation}
where $z_\infty$ is a constant depending only on $\Omega,T$, $\|m_0^{-1}\|_\infty$ and $\|m^{\beta_k} b_k\|_{r/(r-1)}\}_{k=1}^N$.
Appealing once more to \eqref{first m estimate} we get more precisely the estimate \eqref{1 over m estimate}.

In the proof above, in order to remove that assumption that $m$ is strictly positive, simply replace $m$ in with $m+\epsilon$ for $\epsilon > 0$ arbitrary.
Then let $\epsilon \to 0$ to get the result.

To prove \eqref{m infinity esimate}, we use an analogous argument.
In particular, multiplying \eqref{fokker planck} by $m^{q-1}$ we arrive at
\begin{equation} \label{main fokker planck estimate}
\frac{1}{q}\int_{\Omega} m(t)^q + \frac{q-1}{2}\int_0^t \int_{\Omega} m^{q-2}|\nabla m|^2 \leq \frac{1}{q}\int_{\Omega} m_0^{q} + \frac{q+1}{2}\int_0^t \int_\Omega m^q |b|^2.
\end{equation}
By the Sobolev embedding theorem and some calculations (cf. \eqref{holder part of moser}) we obtain
\begin{equation}
\|m\|_{L_t^\infty L_x^q}^q + \|m\|_{L_t^q L_x^{dq/(d-2)}}^q \leq C\|m_0\|_\infty^q + CNq^2\sum_{i=1}^{N}\|m^{\beta_i}b_i\|_{r/(r-1)}^2 \|m\|_{r^*(q+\gamma_i)}^{q+\gamma_i},
\end{equation}
where in this case $\gamma_i = \max\{-2\beta_i(r-1)/r,0\}$.
This leads, using the same argument as above, to
\begin{equation}
\|m\|_{q(1+2/d)} \leq K^{1/q}q^{2/q}\max\{1,\|m_0\|_\infty,\|m\|_{r^*(q+\gamma)}\}^{1+\gamma/q}
\end{equation}
where as before $\gamma = \max\{\gamma_1,\ldots,\gamma_N\}$.
Then a similar iteration procedure as before leads to \eqref{m infinity esimate}.

Finally, we deduce \eqref{H^1 estimate} from \eqref{m infinity esimate} and \eqref{1 over m estimate} as follows.
Plug in the estimate \eqref{m infinity esimate} into \eqref{main fokker planck estimate} with $q =2$ to see that $m$ is bounded in $L^2(0,T;H^1(\Omega))$.
Then observe that \eqref{b estimate} together with \eqref{m infinity esimate} and \eqref{1 over m estimate} imply that $mb \in L^{r/(r-1)}$.
We deduce from the Fokker-Planck equation that $\partial_t m$ is bounded in $L^2(0,T;H^{-1}(\Omega))$.
\end{proof}

\subsection{More a priori estimates}

Here we collect some standard energy  estimates for the mean field game system.

\begin{lemma}[Main energy estimate] \label{main energy estimate}
Let $f=f(t,x,s), g=g(x,s),$ and $H = H(t,x,s,\xi)$ be given, and suppose $(u,m)$ is a bounded solution of \eqref{mfg}.
Assume $f,g,$ and $H$ are bounded below by some constant $-C$.
Then, suppressing the dependence on $(t,x)$, we have
\begin{multline} \label{Main energy estimate}
\int_\Omega g(m(T))m(T) + \int_0^T \int_\Omega f(m)m + (\nabla u \cdot \nabla_p H(m,\nabla u) - H(m,\nabla u))m + \|m_0\|_\infty \int_0^T \int_\Omega H(m,\nabla u)\\ \leq C\|m_0\|_\infty \left(\int_\Omega g_{2\|m_0\|_\infty} + \int_0^T \int_\Omega f_{2\|m_0\|_\infty} + C\right)
\end{multline}
where $f_{2\|m_0\|_\infty}$ and $g_{2\|m_0\|_\infty}$ are defined in \eqref{fL and gL defined}.

In particular, if $m_0 \in L^\infty$ and $f_{2\|m_0\|_\infty}$ and $g_{2\|m_0\|_\infty}$ are integrable, then
\begin{equation} \label{key terms}
\|f(m)m\|_{L^1_{t,x}} + \|g(m(T))m(T)\|_{L^1_x} + \|m(\nabla u \cdot \nabla_p H(m,\nabla u) - H(m,\nabla u))\|_{L^1_{t,x}} + \|H(m,\nabla u)\|_{L^1_{t,x}} \leq C.
\end{equation}
\end{lemma}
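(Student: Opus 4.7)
My plan is to establish \eqref{Main energy estimate} by combining two integral identities coming from \eqref{mfg} and then estimating the resulting right-hand side via parabolic comparison for $u$ and a monotonicity-based splitting for the density-dependent couplings. The first identity is the familiar mean-field-game duality relation, obtained by multiplying \eqref{mfg}(i) by $m$ and \eqref{mfg}(ii) by $u$, adding, and integrating by parts in both variables. The two $\int \nabla u \cdot \nabla m$ contributions cancel, the time-boundary terms produce $\int u(0) m_0 - \int g(m(T)) m(T)$, and the divergence in \eqref{mfg}(ii) gives $-\int m \nabla_p H \cdot \nabla u$, yielding
\begin{equation}\label{mainPlanA}
\int_\Omega u(0)\,m_0 = \int_\Omega g(m(T))\,m(T) + \int_Q f(m)\,m + \int_Q m\bigl[\nabla u \cdot \nabla_p H(m,\nabla u) - H(m,\nabla u)\bigr].
\end{equation}
The second identity follows from testing \eqref{mfg}(i) against the constant $M := \|m_0\|_\infty$ (for Dirichlet data, against a smooth cutoff vanishing near $\partial\Omega$): the Laplacian term drops out and
\begin{equation}\label{mainPlanB}
M \int_\Omega u(0) + M \int_Q H(m,\nabla u) = M \int_\Omega g(m(T)) + M \int_Q f(m).
\end{equation}
Adding $M \int_Q H$ to \eqref{mainPlanA} and using \eqref{mainPlanB} to rewrite the right side, the left-hand side of \eqref{Main energy estimate} equals $M \int g(m(T)) + M \int f(m) - \int u(0)(M - m_0)$.

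To control $-\int u(0)(M - m_0)$ I would derive the lower bound $u \geq -C(1+T)$ by parabolic comparison. Indeed, \eqref{H positive} gives $H \geq 0$ and \eqref{DH bounded below} evaluated at $p = 0$ gives $H(\cdot,\cdot,m,0) \leq C/r$, so a spatially constant affine function $\phi(t) := -A(T-t) - B$ is a classical subsolution of \eqref{mfg}(i) with $\phi(T) \leq g(\cdot, m(T))$ once $A,B$ are chosen sufficiently large relative to the lower bounds on $f, g$ and the constant in \eqref{DH bounded below}. Comparison then yields $u \geq \phi$, and hence $-\int u(0)(M - m_0) \leq C M$ provided $T \leq T_0$.

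For $M \int g(m(T))$ and $M \int f(m)$ I would split each integral according to $\{m \leq L\}$ versus $\{m > L\}$ with $L := 2M$. Monotonicity of $f, g$ in $m$ yields $g(m(T)) \leq g_L$ and $f(m) \leq f_L$ on the low-density set, producing the desired $M\int g_L$ and $M\int f_L$ contributions. On the high-density set, the elementary inequality $M \leq m/2$ lets me absorb $M g^+(m(T))$ into $\tfrac{1}{2} m(T) g(m(T))$---up to a $CM$ error coming from $g \geq -C$---and similarly for $f$. Substituting back into the identity above and using the trivial lower bounds $\int g(m(T)) m(T), \int f(m) m \geq -C$ to absorb the $\tfrac{1}{2}$-weighted copies, \eqref{Main energy estimate} follows with a constant depending only on $r$, $T_0$, $|\Omega|$, and the lower bound $-C$ on the data. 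I anticipate the main technical obstacle to be the comparison step: the inequality $u \geq \phi$ is pointwise, whereas Definition \ref{definition of solutions} provides $u$ only as a distributional solution of \eqref{mfg}(i). The natural remedy is to first prove \eqref{Main energy estimate} for a sequence of regularized problems with classical solutions---for instance truncating the singularity of $H$ in $m$---and then pass to the limit using the a priori bounds on $m$ and $1/m$ from Proposition \ref{fokker planck bounds}.
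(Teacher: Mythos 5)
Your proposal is correct and follows essentially the same route as the paper: the duality identity \eqref{energy identity}, the integration of the Hamilton--Jacobi equation over $Q$ (your identity with the constant test function $M$ is exactly the paper's rewriting of $\|m_0\|_\infty\int_\Omega u(0)$ via $\int_\Omega u(T)-\int_Q \partial_t u$), the pointwise lower bound on $u$ to control $\int u(0)(M-m_0)$, and the splitting of $f,g$ on $\{m\le 2\|m_0\|_\infty\}$ versus its complement, which is precisely the paper's inequality \eqref{f and fL}. The only difference is presentational: you make explicit the comparison argument giving $u\ge -C$, which the paper invokes implicitly as ``the lower bound on $u$.''
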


\begin{proof}
We rely on the rather simple but important observation that
\begin{equation} \label{f and fL}
f(m) \leq f_{L} + \frac{1}{L} f(m)m + Cm, ~~ g(m) \leq g_{L} + \frac{1}{L} g(m)m + Cm
\end{equation}
We will apply this below with $L = 2\|m_0\|_\infty$.

Start with the energy identity,
\begin{equation} \label{energy identity}
\int_\Omega g(m(T))m(T) + \int_0^T \int_\Omega f(m)m + (\nabla u \cdot \nabla_p H(m,\nabla u) - H(m,\nabla u))m = \int_\Omega u(0)m_0,
\end{equation}
which is obtained by adding $m$ times the Hamilton-Jacobi equation to $u$ times the Fokker-Planck equation and then integrating by parts.
Then using the lower bound on $u$,
\begin{multline}
\int_\Omega u(0)m_0 \leq \int_\Omega (u(0)+C)m_0 \leq \|m_0\|_\infty \int_\Omega (u(0)+C)
\leq \|m_0\|_\infty \left(\int_\Omega u(T) - \int_0^T \int_\Omega \partial_t u + C\right)\\
\leq \|m_0\|_\infty \left(\int_\Omega g(m(T)) + \int_0^T \int_\Omega f(m) - \int_0^T \int_\Omega H(m,\nabla u) + C\right).
\end{multline}
Use \eqref{f and fL} and plug into \eqref{energy identity} to get \eqref{Main energy estimate}.

As for \eqref{key terms}, it suffices to observe that each of the terms in \eqref{Main energy estimate} is non-negative: $f(m)m$ and $g(m(T))m(T)$ because $f$ and $g$ are monotone, $\nabla u \cdot \nabla_p H - H$ because $H$ is convex in $p$, and $H$ by hypothesis \eqref{H positive}.
\end{proof}

\begin{corollary} \label{integrability of solutions}
Supose $(u,m)$ is a smooth solution of \eqref{mfg}.
The following estimates hold for a small enough time $T$ and for a constant $C$ depending only on the data.
\begin{enumerate}
\item Estimates \eqref{m infinity esimate},  \eqref{1 over m estimate}, and \eqref{H^1 estimate} hold.
\item $\|\nabla_p H(m,\nabla u)\|_{r/(r-1)} \leq C$.
\item $\|f(m)\|_{L^1_{t,x}} + \|g(m(T))\|_{L^1_x} \leq C$.
\item $\|\partial_t u + \Delta u\|_{L^1_{t,x}} \leq C$.
\end{enumerate}

\end{corollary}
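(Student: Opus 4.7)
The plan is to chain Lemma \ref{main energy estimate} into Proposition \ref{fokker planck bounds} to obtain (1), then extract (2)--(4) as essentially automatic consequences.

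First, since $f$ and $g$ are bounded below by hypothesis and $H\ge 0$ by \eqref{H positive}, Lemma \ref{main energy estimate} applies and yields \eqref{key terms}; in particular $\|H(m,\nabla u)\|_1$ and $\|m(\nabla u\cdot\nabla_p H-H)\|_1$ are bounded by a constant depending only on the data. Combining $H\ge 0$ with \eqref{DH bounded below} gives $\nabla u\cdot\nabla_p H-H\ge (r-1)H-C$, and since $\|m(t)\|_1\le 1$ for all $t$, this extracts the additional bound $\|mH\|_1\le C$. Converting via \eqref{H positive} yields
\[
\int_Q \frac{|\nabla u|^r}{m^\lambda+m^{-\lambda}} \le C, \qquad \int_Q \frac{m\,|\nabla u|^r}{m^\lambda+m^{-\lambda}} \le C.
\]
Decomposing $Q$ into $\{m\ge 1\}$ and $\{m<1\}$, on which $m^\lambda+m^{-\lambda}$ is comparable to $m^\lambda$ and to $m^{-\lambda}$ respectively, these produce four weighted estimates of the form $\int m^\gamma|\nabla u|^r\le C$ on the respective regions, for appropriate exponents $\gamma$.

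Next, I would verify hypothesis \eqref{b estimate} of Proposition \ref{fokker planck bounds} for $b=-\nabla_p H(m,\nabla u)$. By \eqref{DH bounded}, $|b|$ is dominated by a finite sum of terms of the form $m^{\pm\lambda}$ and $m^{\pm\lambda}|\nabla u|^{r-1}$, each further localized to one of the two regions above. For the pieces without $|\nabla u|$, the bound $\|m\|_1\le 1$ together with an exponent $\beta_k$ of the form $\pm\lambda+(r-1)/r$ places $m^{\beta_k}b_k$ in $L^{r/(r-1)}$; for the $|\nabla u|^{r-1}$ pieces, an analogous choice reduces the $L^{r/(r-1)}$ norm to one of the four weighted gradient integrals just obtained. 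Proposition \ref{fokker planck bounds} then delivers item (1).

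With $m,m^{-1}\in L^\infty$ in hand, the remaining items are immediate. For (2), \eqref{DH bounded} becomes $|\nabla_p H|\le C(1+|\nabla u|^{r-1})$, and \eqref{H positive} combined with $\|H\|_1\le C$ and the bounded weight gives $\|\nabla u\|_r\le C$, whence $\|\nabla_p H\|_{r/(r-1)}\le C$. For (3), \eqref{f and fL} applied with $L=2\|m_0\|_\infty$ yields $f(m)\le f_L+L^{-1}f(m)m+Cm$; integrability of $f_L$, of $f(m)m$, and of $m$ then bounds $\|f(m)\|_1$, and $g$ is handled identically. For (4), equation (i) of \eqref{mfg} rearranges to $\partial_t u+\Delta u=H(m,\nabla u)-f(t,x,m)$, whose right side is $L^1$ by the previous items. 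The main obstacle is the verification of \eqref{b estimate}: before Proposition \ref{fokker planck bounds} is applied we have no a priori control on pure powers of $m$ beyond $m\in L^1$, only the coupled weighted gradient estimates above. The decomposition of $b$ and the corresponding choice of $\beta_k$ must therefore be tuned separately on $\{m\ge 1\}$ and $\{m<1\}$ to match each piece of $b$ with the appropriate weighted bound.
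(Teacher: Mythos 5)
Your proposal is correct and follows essentially the same route as the paper: both verify hypothesis \eqref{b estimate} for $b=\nabla_p H(m,\nabla u)$ by combining the energy estimate \eqref{key terms} with the structure conditions \eqref{H positive}--\eqref{DH bounded below} and tuning the exponents $\beta_k$ to offload the unknown powers of $m$, then invoke Proposition \ref{fokker planck bounds}, after which (2)--(4) follow exactly as you describe. The only (immaterial) difference is bookkeeping: the paper packages the decomposition via the single chain \eqref{gradient H estimate} with $v=(1+\nabla_p H\cdot p-H)^{(r-1)/r}$ and the bound $\|mv^{r/(r-1)}\|_1\le C$, whereas you use $\|H\|_1,\|mH\|_1\le C$ together with a split over $\{m\ge1\}$ and $\{m<1\}$.
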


\begin{proof}

1. As a result of \eqref{DH bounded}, \eqref{H positive}, and then \eqref{DH bounded below}, we have
\begin{multline} \label{gradient H estimate}
|\nabla_p H(t,x,m,p)|^{r/(r-1)} \leq C(m^{\lambda}+m^{-\lambda})^{r/(r-1)}(1+|p|^{r}) \\
\leq C(m^{\lambda }+m^{-\lambda })^{2r/(r-1)}(1+H(t,x,m,p)) \\
\leq C(m^{\lambda}+m^{-\lambda})^{2r/(r-1)}(1+\nabla_p H(t,x,m,p) \cdot p - H(t,x,m,p)).
\end{multline}
Let 
$
b = \nabla_p H(t,x,m,\nabla u)
$
and
$
v = (1+\nabla_p H(t,x,m,p) \cdot p - H(t,x,m,p))^{(r-1)/r}.
$
Then we have
$$
|b| \leq Cm^{2\lambda} + Cm^{-2\lambda} + Cm^{2\lambda}v + Cm^{-2\lambda}v =: b_1 + b_2 + b_3 + b_4.
$$
Moreover, from \eqref{key terms} and the definition of $v$ it follows that $\|m|v|^{r/(r-1)}\|_1 \leq C$.
Therefore we can choose some $\beta_i$, $i=1,2,3,4$, such that
$
\|m^{\beta_i}b_i\|_{r/(r-1)} \leq C.
$
Hence we reach the desired conclusion by Proposition \ref{fokker planck bounds}.

2. Combining the estimate $\|m\|_\infty + \|m^{-1}\|_\infty \leq C$ from the previous step with \eqref{gradient H estimate} and \eqref{key terms}, we conclude that $\|\nabla_p H(m,\nabla u)\|_{r/(r-1)} \leq C$.

3. This follows from \eqref{key terms} together with \eqref{f and fL}.

4. This follows by using the estimate $\|H(m,\nabla u)\|_1 \leq C$ from \eqref{key terms} together with part 4 in the Hamilton-Jacobi equation.
\end{proof}

\begin{remark}
We observe that hypotheses \eqref{H positive},\eqref{DH bounded}, and \eqref{DH bounded below} were not used directly to prove the a priori estimates in Corollary \ref{integrability of solutions}, but only to derive the estimate \eqref{gradient H estimate}.
This will be useful in the following section, where we show that a particular approximation of $H$ which is Lipschitz in $p$ satisfies \eqref{gradient H estimate}, even though it does not satisfy \eqref{H positive},\eqref{DH bounded}, and \eqref{DH bounded below}.
\end{remark}

\subsection{Proof of Theorem \ref{existence of weak solutions}}

In order to construct solutions of \eqref{mfg}, we first derive smooth solutions to an approximation of \eqref{mfg}, and by compactness we show that they converge to weak solutions of \eqref{mfg}.
For the first step, we have
\begin{lemma} \label{bounded solutions}
Let $H(t,x,m,p)$ satisfy the following:
$$
|H(t,x,m,0)| + |\nabla_p H(t,x,m,p)| \leq C.
$$
Suppose $f,g$ are also bounded.
Then there exists a bounded (smooth in $(0,T)$) solution $(u,m)$ to \eqref{mfg}.
\end{lemma}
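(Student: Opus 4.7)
The plan is to construct $(u,m)$ via Schauder's fixed point theorem applied to the map $\Phi\colon m\mapsto \tilde m$ that first solves the Hamilton-Jacobi equation backward in $t$ and then the linear Fokker-Planck equation forward in $t$. Fix $\alpha\in(0,1)$ and constants $M, R>0$ to be chosen, and set
\[
K := \bigl\{ m\in C^{\alpha/2,\alpha}(\bar Q): 0\le m\le M,\ \|m\|_{C^{\alpha/2,\alpha}}\le R\bigr\},
\]
which is a convex, compact subset of $C([0,T];L^1(\Omega))$. For each $m\in K$ the terminal datum $g(\cdot,m(T))$ and source $f(\cdot,\cdot,m)$ are bounded by hypothesis, and the quasilinear problem
\[
-\partial_t u - \Delta u + H(t,x,m,\nabla u) = f(t,x,m),\qquad u(T,\cdot) = g(\cdot,m(T))
\]
satisfies $|H(t,x,m,p)|\le C(1+|p|)$ with $H$ globally Lipschitz in $p$. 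Standard quasilinear parabolic theory (e.g.\ Ladyzhenskaya-Solonnikov-Uraltseva) then yields a unique bounded solution $u$ with $\|u\|_\infty + \|\nabla u\|_\infty \le C_1$, where $C_1$ depends on the data but not on the particular $m\in K$.

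With $u$ in hand, the drift $b := \nabla_p H(t,x,m,\nabla u)$ lies in $L^\infty(Q)$ with norm at most $C$. The resulting linear Fokker-Planck problem
\[
\partial_t \tilde m - \Delta \tilde m - \mathrm{div}(\tilde m b) = 0,\qquad \tilde m(0) = m_0
\]
admits a unique nonnegative weak solution $\tilde m$ with $\|\tilde m\|_\infty \le C_2$ by the maximum principle, and De Giorgi-Nash-Moser estimates upgrade this to $\|\tilde m\|_{C^{\alpha/2,\alpha}(\bar Q)} \le C_3$ for some $\alpha\in(0,1)$ and constants depending only on $\|b\|_\infty$, $\|m_0\|_\infty$, $T$, and $\Omega$. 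Since $C_3$ is independent of $R$, the choice $M := C_2$ and $R := C_3$ guarantees $\Phi(K)\subset K$.

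To invoke Schauder's theorem I must verify the continuity of $\Phi$ on $K$ with respect to the $C([0,T];L^1)$ topology. If $m_n\to m$ in $K$, rewrite the HJB equation for $u_n$ as a linear parabolic equation with bounded coefficient $\int_0^1 \nabla_p H(t,x,m_n,s\nabla u_n)\,ds$ and bounded right-hand side $f(t,x,m_n)-H(t,x,m_n,0)$; interior $L^p$-parabolic regularity then yields uniform bounds on $u_n$ in $L^p(0,T;W^{2,p}(\Omega))\cap W^{1,p}(0,T;L^p(\Omega))$ for every finite $p$. Extracting a subsequence $u_n\to u^\ast$ in $C^{1,\alpha'}$ and using continuity of $H$ in $m$ together with a.e.\ convergence $m_n\to m$, one identifies $u^\ast = u$ as the unique solution attached to the limit $m$; the full sequence converges by uniqueness. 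Then $b_n\to b$ strongly in every $L^q(Q)$, and stability of the linear Fokker-Planck equation under such convergence of its drift gives $\tilde m_n\to \tilde m$ in $C([0,T];L^1)$.

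Schauder's theorem then produces a fixed point $m=\Phi(m)\in K$, and $(u,m)$, with $u$ the HJB solution associated to this $m$, is a bounded solution of \eqref{mfg}. Interior $C^\infty$-smoothness on $(0,T)\times\Omega$ follows from standard parabolic bootstrap, alternating Schauder estimates on the two decoupled equations once $\nabla u$ and $m$ are known to be Hölder. The main obstacle is the continuity argument above: because $H$ depends nonlinearly on $\nabla u$ and only continuously (not Lipschitz) on $m$, one cannot avoid upgrading weak gradient convergence $\nabla u_n\rightharpoonup \nabla u$ to strong uniform convergence before passing to the limit in $H(t,x,m_n,\nabla u_n)$, and this forces the use of the stronger $W^{2,p}$-bounds coming from the interior linear theory.
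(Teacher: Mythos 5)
Your proposal is correct and follows essentially the same route as the paper: a Schauder fixed-point argument for the map that solves the Hamilton--Jacobi equation backward and then the Fokker--Planck equation forward, using the uniform $L^\infty$ bound on $\nabla_p H$ to get a drift-independent invariant set. The paper's version is more compressed (it works in the convex set $\{\|m\|_\infty \le L\}\subset C([0,T];L^2)$ and asserts continuity and compactness of the composed map), whereas you supply the compactness via De Giorgi--Nash--Moser H\"older estimates and the continuity via $W^{2,p}$ parabolic bounds, but the underlying argument is the same.
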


\begin{proof}
Set
$$
X_L = \{m \in C([0,T];L^2(\Omega)) \cap L^\infty(Q ) : \|m\|_\infty \leq L\}.
$$
For any $\mu \in X_L$, define $u_\mu \in L^2(0,T;H^1(\Omega)) \cap L^\infty(Q )$ to be the (unique) bounded solution of
\begin{equation}
\left\{
\begin{array}{l}
-\partial_t u - \Delta u + H(t,x,m,\nabla u) = f(t,x,\mu)  \\
u(T,x) = g(x,\mu(T,x))
\end{array}\right.
\end{equation}
We then set $m = \Phi(\mu)$ to be the solution of
\begin{equation}
\left\{
\begin{array}{l}
\partial_t m - \Delta m - \mathrm{div} \left(m\nabla_p H(t,x,m,\nabla u_\mu)\right) = 0  \\
m(0,x) = m_0(x)
\end{array}\right.
\end{equation}
By standard results on parabolic equations there exists a constant $L$ depending on $\|\nabla_p H\|_\infty,\|m_0\|_\infty$ such that $m$ is bounded by $L$ uniformly in $\mu$.
For this $L$, it follows that $X_L$ is an invariant convex subset of $C([0,T];L^2(\Omega))$ under the map $\Phi$, which is continuous and compact.
By Schauder's fixed point theorem, we conclude.
\end{proof}

We now define an approximation of system \eqref{mfg} satisfying the hypotheses of Lemma \ref{bounded solutions}.
For $\epsilon > 0$, let
\begin{equation}
H_\epsilon(t,x,m,p) = \frac{H(t,x,\kappa_\epsilon(m),p)}{1+\epsilon H(t,x,\kappa_\epsilon(m),p)^{(r-1)/r}}
\end{equation}
where
$$
\kappa_\epsilon(m) := \min\left\{\max\{m,\epsilon\},\frac{1}{\epsilon}\right\}.
$$
It follows that
\begin{equation}
\nabla_p H_\epsilon(t,x,m,p) = \frac{(1+\frac{\epsilon}{r}H(t,x,\kappa_\epsilon(m),p)^{(r-1)/r})\nabla_p H(t,x,\kappa_\epsilon(m),p)}{(1+\epsilon H(t,x,\kappa_\epsilon(m),p)^{(r-1)/r})^2}
\end{equation}
so that by \eqref{H positive} and \eqref{DH bounded} we have
$$
|\nabla_p H_\epsilon(t,x,m,p)| \leq \frac{|\nabla_p H(t,x,\kappa_\epsilon(m),p)|}{1 + \epsilon H(t,x,\kappa_\epsilon(m),p)^{(r-1)/r}}
\leq \frac{C}{\epsilon}(\kappa_\epsilon(m)^{2\lambda}+\kappa_\epsilon(m)^{-2\lambda})
\leq C(\epsilon).
$$
We also set $f_\epsilon := \min\{f,\frac{1}{\epsilon}\},g_\epsilon := \min\{g,\frac{1}{\epsilon}\}$.
Note that $H_{\epsilon}(t,x,m,p),f_{\epsilon}(t,x,m),$ and $g_{\epsilon}(x,m)$ satisfy the hypotheses of Lemma \ref{bounded solutions}.

Additionally, we claim $H_\epsilon$ also satisfies a form of the estimate \eqref{gradient H estimate}, which together with Lemma \ref{main energy estimate} is sufficient to give the a priori bounds from Corollary \ref{integrability of solutions}.
From \eqref{DH bounded below} we see that
\begin{equation}
\nabla_p H_\epsilon(t,x,m,p) \cdot p -  H_\epsilon(t,x,m,p)
\geq \frac{(r-1)H(t,x,\kappa_\epsilon(m),p)}{(1+\epsilon H(t,x,\kappa_\epsilon(m),p)^{(r-1)/r})^2} - C,
\end{equation}
while from \eqref{H positive} and \eqref{DH bounded} we get
\begin{align} 
 \label{H epsilon first estimate}
|\nabla_p H_\epsilon(t,x,m,p)|^{r/(r-1)} 
& \leq C(\kappa_\epsilon(m)^\lambda + \kappa_\epsilon(m)^{-\lambda})^{2r/(r-1)}\frac{1+H(t,x,\kappa_\epsilon(m),p)}{(1+\epsilon H(t,x,\kappa_\epsilon(m),p)^{(r-1)/r})^2} 
\\ \nonumber
 & \leq C(\kappa_\epsilon(m)^\lambda + \kappa_\epsilon(m)^{-\lambda})^{2r/(r-1)}(1+H_\epsilon(t,x,m,p))
\end{align}
using the fact that $r/(r-1) > 2$.
We deduce
\begin{equation} \label{H epsilon structure}
|\nabla_p H_\epsilon(t,x,m,p)|^{r/(r-1)} \leq C(m^{2r\lambda /(r-1)} + m^{-2r\lambda /(r-1)})(1+\nabla_p H_\epsilon(t,x,m,p) \cdot p -  H_\epsilon(t,x,m,p)),
\end{equation}
which gives an estimate of the form \eqref{gradient H estimate} with an adjusted value of $\lambda$.
(Such an adjustment of the parameter $\lambda$ does not change the a priori estimates obtained from Corollary \ref{integrability of solutions}.)

Now let $(u_\epsilon,m_\epsilon)$ be bounded solutions of \eqref{mfg} with $H,f,g$ replaced by $H_\epsilon,f_\epsilon,g_\epsilon$, respectively.
We claim that, up to a subsequence, $(u_\epsilon,m_\epsilon)$ converges to a weak solution $(u,m)$ of \eqref{mfg}.
Recall that we are assuming $T$ is small enough so that Corollary \ref{integrability of solutions} is valid.

From Corollary \ref{integrability of solutions} we see that $-\partial_t u_\epsilon - \Delta u_\epsilon$ is bounded in $L^1(Q)$ and that $u_\epsilon(T)$ is bounded in $L^1(\Omega)$.
This implies that $u_\epsilon$ and $\nabla u_\epsilon$ are relatively compact in $L^1(Q)$, so for some $u \in L^1(Q)$ we have, up to a subsequence, $u_\epsilon \to u$ strongly in $L^1(Q)$, $\nabla u_\epsilon \to \nabla u_\epsilon$ strongly in $L^1(Q)$ as well as pointwise almost everywhere.

On the other hand, we have that $m_\epsilon$ satisfies \eqref{m infinity esimate}, \eqref{1 over m estimate}, and \eqref{H^1 estimate} uniformly in $\epsilon$, that is,
$$
\|m_\epsilon\|_\infty + \|m_\epsilon\|_{L^2(0,T;H^1(\Omega))} + \|\partial_t m_\epsilon\|_{L^r(0,T;W^{-1,r}(\Omega))} \leq C.
$$
From standard compactness results (cf. \cite{simon86}) we have for some subsequence that $m_\epsilon$ strongly converges in $L^2(Q)$ and almost everywhere on a subsequence  to some $m$.
Moreover, by part 2 of Corollary \ref{integrability of solutions} we have that $\|\nabla_p H_\epsilon(t,x,m_\epsilon,\nabla u_\epsilon)\|_{r/(r-1)} \leq C$, and since $r/(r-1) > 2$ we can use H\"older's inequality to see that $m_\epsilon|\nabla_p H_\epsilon(t,x,m_\epsilon,\nabla u_\epsilon)|^2$ is uniformly integrable.
We conclude that
$$
\sqrt{m_\epsilon}\nabla_p H_\epsilon(t,x,m_\epsilon,\nabla u_\epsilon) \to \sqrt{m}\nabla_p H(t,x,m,\nabla u) ~~\text{strongly in}~L^2(Q).
$$
Applying \cite[Theorem 6.1]{porretta2013weak} we obtain $m_\epsilon \to m$ in $C([0,T];L^1(\Omega))$ and that $m$ is a weak solution of the corresponding Fokker-Planck equation.

It remains to show that $u$ solves the Hamilton-Jacobi equation.
Since $m_\epsilon \to m$ and $\nabla u_\epsilon \to \nabla u$ almost everywhere, then by Fatou's Lemma we have
$$
\int_Q H(t,x,m,\nabla u)dx dt \leq \liminf_{\epsilon \to 0} \int_Q H_\epsilon(t,x,m_\epsilon,\nabla u_\epsilon)dx dt \leq C
$$
by \eqref{key terms}.
By hypothesis \eqref{H positive} together with the fact that $m$ and $m^{-1}$ are bounded, we see that $\|\nabla u\|_r \leq C$ so that $u \in L^r(0,T;W^{1,r}(\Omega))$.

Now it can be shown using \eqref{f and fL} that $g_\epsilon(\cdot,m_\epsilon(T))$ and $f_\epsilon(\cdot,\cdot,m_\epsilon)$ are equi-integrable.
Indeed, we have
$$
\int_{E} g_\epsilon(\cdot,m_\epsilon(T))
\leq \int_{E} g_L + \frac{1}{L}\int_{E} g_\epsilon(\cdot,m_\epsilon(T))m_\epsilon(T) + C\int_{E} m_\epsilon(T).
$$
Taking the measure of $E$ to zero and then $L$ to infinity we see that the right-hand side goes to zero, as desired, so $g_\epsilon(\cdot,m_\epsilon(T))$ is equi-integrable.
The argument for  $f(\cdot,\cdot,m_\epsilon)$ is analogous.
Given that $m_\epsilon(T) \to m(T)$ and $m_\epsilon \to m$ almost everywhere, it follows that
$$
g_\epsilon(\cdot,m_\epsilon(T)) \to g(\cdot,m(T)) ~~\text{in}~L^1(\Omega),
~~ f_\epsilon(\cdot,\cdot,m_\epsilon) \to f(\cdot,\cdot,m)~~\text{in}~L^1(Q).
$$
As for the Hamiltonian term, because we have that $m_\epsilon$ and $m_\epsilon^{-1}$ are uniformly bounded, $H_\epsilon(t,x,m_\epsilon,\nabla u_\epsilon)$ therefore has uniformly subquadratic growth in $\nabla u_\epsilon$ by hypothesis \eqref{DH bounded}.
We can now invoke previous results on parabolic equations (see for instance \cite{porretta1999existence} and references therein) to conclude that $H_\epsilon(t,x,m_\epsilon,\nabla u_\epsilon)$ strongly converges in $L^1(Q)$ to $H(t,x,m,\nabla u)$ and that $u$ is a weak solution to the Hamilton-Jacobi equation.

Finally, suppose we have that $f_L$ and $g_L$ are bounded for all $L > 0$.
Then by \eqref{m infinity esimate} it follows that $f(m)$ and $g(m(T))$ are bounded.
So by classical results on quasilinear parabolic equations, $u$ is bounded and locally H\"older continuous (see any text on parabolic equations, e.g. \cite{ladyzhenskaia1968linear}).

This completes the proof of Theorem \ref{existence of weak solutions}.

\begin{remark}
Naturally, we could show that the solution is even smooth, if we assume that $H$ is smooth enough.
\end{remark}

\section{Uniqueness}
\label{uniqueness section}

The proof of Theorem \ref{uniqueness} requires two main ingredients.
First, we prove that the estimates of Proposition \ref{fokker planck bounds} are valid for all weak solutions of the Fokker-Planck equation, and not just those with sufficient regularity.
Second, we formulate the structure conditions on the Hamiltonian which allow us to prove uniqueness.

\subsection{Regularity of weak solutions}

To obtain existence of solutions, it was sufficient to have a priori bounds on a sequence of smooth solutions, from which we showed that this sequence was compact.
On the other hand, in order to establish uniqueness of weak solutions without restricting their definition, it is necessary to show that they are regular enough to perform integration by parts.
For this we will prove an extension of Proposition \ref{fokker planck bounds}.

First, we give a weaker definition of solutions of the Fokker-Planck equation \eqref{fokker planck}-\eqref{boundary conditions fokker planck}.
Suppose $b \in L^2(Q)$.
\begin{definition}
[Weak solutions of the Fokker-Planck equation]
\label{weak solution fokker-planck}
A function $m \in L^1(Q)_+$ is a weak solution of \eqref{fokker planck}-\eqref{boundary conditions fokker planck} provided that
$$
m|b|^2 \in L^1(Q)
$$
and
$$
\int_Q m(-\phi_t -\Delta \phi + b \cdot \nabla \phi) = \int_{\Omega} m_0 \phi(0)
$$
for every $\phi \in C_c^\infty([0,T),\overline{\Omega})$ satisfying the corresponding boundary conditions taken from \eqref{test function boundary}.
\end{definition}
The above definition, taken from \cite{porretta2013weak}, has two interesting properties.
One is that, thanks to the condition that $m|b|^2 \in L^1(Q)$, one can show weak solutions defined in this way are unique.
The second, which is of more interest to us, is that weak solutions are equivalent to {\em renormalized solutions.}

Define, for $k > 0$, the truncation function
\begin{equation}
\label{truncation function}
T_k(s) := \min(k,\max(s,-k)).
\end{equation}
A renormalized solution of \eqref{fokker planck} is defined as a function $m \in L^1(Q)_+$ such that
\begin{equation} \label{renormalized fokker planck}
\begin{array}{cl}
(i) & T_k(m) \in L^2(0,T;H^1(\Omega)) ~~ \forall  k > 0. \\
(ii) & \displaystyle \lim_{N \to \infty} \frac{1}{N} \iint_{N < |m| < 2N} |\nabla m |^2 = 0 \\
(iii) & \left\{\begin{array}{ll}
\partial_t S(m ) - \Delta S(m ) - \mathrm{div}(S'(m)m b) = - S''(m )|\nabla m |^2 - S''(m)m b \cdot \nabla m & \text{in}~Q \\
S(m )(0) = S(m _0) & \text{in}~\Omega
\end{array}\right.
\end{array}
\end{equation}
for every $S \in W^{2,\infty}(\bb{R})$ such that $S'$ has compact support.
Part (iii) is understood in the sense of distributions, again choosing test functions according to the specified boundary conditions.

The equivalence between weak and renormalized solutions of \eqref{fokker planck} will be crucial in the proof of the following:

\begin{proposition}
\label{fokker planck bounds weak}
Assume the same hypotheses as in Proposition \ref{fokker planck bounds}, except that $m \in L^1(Q)_+$ is a weak solution of \eqref{fokker planck}-\eqref{boundary conditions fokker planck}.
Then the conclusion of Proposition \ref{fokker planck bounds} holds.
\end{proposition}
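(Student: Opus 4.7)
The extra regularity $m \in L^2(0,T;H^1(\Omega)) \cap L^\infty(Q)$ assumed in Proposition \ref{fokker planck bounds} was used only to render the test functions $m^{q-1}$ and $(m+\epsilon)^{-q-1}$ admissible against the Fokker-Planck equation. Since weak solutions in the sense of Definition \ref{weak solution fokker-planck} are equivalent to renormalized solutions, my strategy is to reproduce inequalities \eqref{first m estimate} and \eqref{main fokker planck estimate} using the renormalized identity \eqref{renormalized fokker planck}(iii) for a family of functions $S_n$ with $S_n'$ compactly supported, pass to the limit $n\to\infty$, and then invoke the Moser iteration of Proposition \ref{fokker planck bounds} verbatim.

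Concretely, fix $\epsilon>0$ and work with $m+\epsilon$. For the upper bound \eqref{m infinity esimate}, for each $q\geq 2$ and $n\geq 1$ I take $S_n\in W^{2,\infty}(\bb{R})$ with $S_n'(s)=s_+^{q-1}\chi_n(s)$, where $\chi_n$ is a smooth cutoff equal to $1$ on $[0,n]$ and supported in $[0,2n]$; then $S_n(s)\nearrow s_+^q/q$ as $n\to\infty$. Integrating \eqref{renormalized fokker planck}(iii) in space (the divergence term drops by the chosen boundary conditions) and then in time gives
\[
\int_\Omega S_n(m(t)) + \int_0^t\!\!\int_\Omega S_n''(m)|\nabla m|^2 = \int_\Omega S_n(m_0) - \int_0^t\!\!\int_\Omega S_n''(m)\, m\, b\cdot\nabla m .
\]
On $\{m\leq n\}$ the integrand $S_n''(m)=(q-1)m^{q-2}$ reproduces the identity behind \eqref{main fokker planck estimate}, and Young's inequality controls the transport term by the diffusion term plus $\int m^q|b|^2$. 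On the boundary layer $\{n\leq m\leq 2n\}$ the extra contributions from $\chi_n'$ vanish in the limit thanks to condition \eqref{renormalized fokker planck}(ii) combined with $m|b|^2\in L^1(Q)$ (which is part of Definition \ref{weak solution fokker-planck}), via a Cauchy-Schwarz split that pairs $|\nabla m|^2$ with $m|b|^2$ on the same set. Monotone convergence on the left and dominated convergence on the right then recover \eqref{main fokker planck estimate} for the weak solution. An analogous choice $\tilde S_n'(s)=-(s+\epsilon)^{-q-1}\chi_n(s)$ (for which $\tilde S_n''(s)=(q+1)(s+\epsilon)^{-q-2}\chi_n(s)-(s+\epsilon)^{-q-1}\chi_n'(s)$ is bounded because $s+\epsilon\geq\epsilon>0$ and $\chi_n'$ is compactly supported) produces \eqref{first m estimate} for $m+\epsilon$.

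With \eqref{first m estimate} and \eqref{main fokker planck estimate} in hand, the Moser iteration, the ordinary differential inequality argument bootstrapping $\|m^{-1}(t)\|_q$, and the final passage $\epsilon\to 0$ all proceed word for word as in the proof of Proposition \ref{fokker planck bounds}, yielding \eqref{m infinity esimate}, \eqref{1 over m estimate}, and then \eqref{H^1 estimate} via the Fokker-Planck equation. The main obstacle will be carrying out carefully the vanishing of the renormalization defect, namely showing that both $\int S_n''(m)|\nabla m|^2-(q-1)\int_{\{m\leq n\}}m^{q-2}|\nabla m|^2$ and the analogous correction in the transport term tend to $0$; this rests entirely on hypothesis \eqref{renormalized fokker planck}(ii) applied on the level sets $\{n\leq m\leq 2n\}$, together with the $L^1$-integrability of $m|b|^2$.
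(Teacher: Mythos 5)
Your overall strategy is the same as the paper's: invoke the equivalence of weak and renormalized solutions (\cite[Theorem 3.6]{porretta2013weak}), test the renormalized identity \eqref{renormalized fokker planck}(iii) with truncated power-type functions $S$ having $S'$ of compact support, remove the truncation, and then run the Moser iteration of Proposition \ref{fokker planck bounds} verbatim. For the lower bound \eqref{1 over m estimate} your construction works and is essentially the paper's: your $\tilde S_n$ has $\tilde S_n''\geq 0$ (since $\chi_n'\leq 0$ on the layer), so the layer contribution to the gradient term has a favorable sign, and the transport defect is of order $n^{-q-2}\cdot(2n)^2\int_{\{n<m<2n\}}|b|^2\to 0$; the paper achieves the same effect with a globally convex quadratic extension of $(m+\epsilon)^{-q}$ on $[N,N+1]$ and bounds the defect by $\frac{q}{2}(N+\epsilon)^{-q-1}(N+1)^2\|b\|_{L^2}^2$, needing neither condition \eqref{renormalized fokker planck}(ii) nor $m|b|^2\in L^1$ for this step.

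The genuine gap is in your treatment of the upper bound \eqref{m infinity esimate}. With $S_n'(s)=s_+^{q-1}\chi_n(s)$ you have $S_n''(s)=(q-1)s^{q-2}\chi_n(s)+s^{q-1}\chi_n'(s)$, and the second term is \emph{negative} on the layer $\{n\leq m\leq 2n\}$ with $|s^{q-1}\chi_n'(s)|\sim n^{q-2}$. The resulting gradient defect is bounded only by
$C\,n^{q-2}\iint_{\{n<m<2n\}}|\nabla m|^2 = C\,n^{q-1}\cdot\frac{1}{n}\iint_{\{n<m<2n\}}|\nabla m|^2$,
and condition \eqref{renormalized fokker planck}(ii) merely makes the last factor $o(1)$; the product is $o(n^{q-1})$, which does not tend to zero for $q>1$. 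The same scaling obstruction appears in the transport defect ($n^{q-1}$ times the product of two $o(1)$ square roots). So the asserted vanishing of the renormalization defect for the superlinear truncation does not follow from (ii) plus $m|b|^2\in L^1$ as you claim; this is precisely the point where the convexity trick that saves the lower bound fails, because any compactly supported $S'$ approximating $s^{q-1}$ from below must have $S''<0$ somewhere, with $|S''|$ growing like $n^{q-2}$. (The paper disposes of the upper bound with the single word ``analogous,'' so it does not supply the missing estimate either, but your proposal makes an explicit quantitative claim that is not justified by the stated hypotheses.) To close the gap you would need either a stronger decay of $\iint_{\{n<m<2n\}}|\nabla m|^2$ than (ii) provides, or a different route to the $L^\infty$ upper bound (e.g.\ a duality/comparison argument, or first upgrading the integrability of $m$ so that the layer sets $\{m>n\}$ vanish fast enough).
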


\begin{proof}
By \cite[Theorem 3.6]{porretta2013weak}, $m \in C([0,T];L^1(\Omega))$ and is a renormalized solution.
Let $\epsilon > 0$ be small, $N > 0$ be large.
Consider the auxiliary function
\begin{equation}
S(m) :=
\left\{
\begin{array}{ll}
\epsilon^{-q} &\text{if}~ m \leq 0\\
(m+\epsilon)^{-q} &\text{if}~0 \leq m \leq N\\
(N+\epsilon)^{-q} + \frac{q}{2}(N+\epsilon)^{-q-1}[(m-N-1)^2-1] &\text{if}~ N \leq m \leq N+1\\
(N+\epsilon)^{-q} - \frac{q}{2}(N+\epsilon)^{-q-1} &\text{if}~ N + 1 \leq m
\end{array}\right.
\end{equation}
which satisfies $S \in W^{2,\infty}(\bb{R})$ and $S'$ has compact support. 
Now integrating \eqref{renormalized fokker planck} and using the fact that $S'' \geq 0$ we get
\begin{equation} \label{renormalized estimate}
\int_\Omega S(m(t)) + \frac{1}{2}\int_0^t \int_\Omega S''(m)|\nabla m|^2 \leq \int_\Omega S(m_0) + \frac{1}{2}\int_0^t \int_\Omega S''(m)m^{2}|b|^2.
\end{equation}
This implies
\begin{multline}
\frac{1}{q}\int_{m(t) \leq N} (m(t)+\epsilon)^{-q} + \frac{q+1}{2}\iint_{m \leq N} (m+\epsilon)^{-q-2}|\nabla m|^2 \\
\leq \frac{1}{q}\int_\Omega m_0^{-q} + |\Omega|(N+\epsilon)^{-q} +  \frac{q+1}{2}\iint_{m \leq N} (m+\epsilon)^{-q-2\alpha}|b|^2 + \frac{q}{2}(N+\epsilon)^{-q-1}(N+1)^{2}\int_Q |b|^2.
\end{multline}
We may assume $q$ is large. Let $N \to \infty$ to obtain
\begin{equation}
\frac{1}{q}\int_\Omega (m(t)+\epsilon)^{-q} + \frac{q+1}{2}\int_0^t \int_\Omega (m+\epsilon)^{-q-2}|\nabla m|^2 \leq \frac{1}{q}\int_\Omega m_0^{-q} + \frac{q+1}{2}\int_0^t \int_\Omega (m+\epsilon)^{-q}|b|^2.
\end{equation}
Then we may follow the proof of Proposition \ref{fokker planck bounds} to see that \eqref{1 over m estimate} holds.
The proof of \eqref{m infinity esimate} is analogous (replace $-q$ with $q$).

Finally, to see that \eqref{H^1 estimate} holds, note that we have $0 \leq m \leq N$ almost everywhere for some $N > 0$.
Define a new auxiliary function $S \in W^{2,\infty}(\bb{R})$ such that $S'$ has compact support, and such that $S(r) = \frac{1}{2}r^2$ for $r \in [0,N]$.
Then \eqref{renormalized estimate} implies
\begin{equation}
\frac{1}{2}\int_{\Omega} m(t)^2 + \frac{1}{2}\iint_{Q} |\nabla m|^2 \leq \frac{1}{2}\int_\Omega S(m_0) + \frac{\|m\|_\infty}{2}\int_0^t \int_\Omega |b|^2.
\end{equation}
We conclude that $m \in L^2(0,T;H^1(\Omega))$, from which \eqref{H^1 estimate} follows.
\end{proof}

\begin{remark}
\label{use of proposition}
By Definition \ref{definition of solutions}, the result we have just proved implies that the conclusion of Proposition \ref{fokker planck bounds} applies to $m$ for any weak solution $(u,m)$ of \eqref{mfg}.
\end{remark}

\begin{remark}
\label{difficulty of approximation}
One may consider proving Proposition \ref{fokker planck bounds weak} by approximating weak solutions of the Fokker-Planck equation with smooth ones, appealing to Proposition \ref{fokker planck bounds} and then passing to the limit.
The difficulty one then encounters is to show that one can find such a smooth approximation such that \eqref{b estimate} also holds.
For instance, if one takes an arbitrary smooth approximation $b_\epsilon$ of the vector field $b$, then one risks losing the estimate \eqref{b estimate}.
It turns out that using the theory of renormalized solutions is a much more tractable way of extending the essential estimates to weak solutions.
\end{remark}

\subsection{Proof of Theorem \ref{uniqueness}}

Now that we know weak solutions have some extra regularity, it will be possible to prove uniqueness using integration by parts.
Before proceeding, we need a technical lemma.
Recall that Assumption \eqref{uniqueness hamiltonian} is in place, which allows us to prove the following:
\begin{lemma} \label{convexity}
Assume that \eqref{uniqueness hamiltonian} holds.
Then for all $m_1,m_0 > 0$ and all $p_1,p_0 \in \bb{R}^d$, we have
\begin{equation}
 (m_1\nabla_p H(m_1,p_1)-m_0\nabla_p H(m_0,p_0)) \cdot (p_1-p_0)
  -(H(m_1,p_1)-H(m_0,p_0))(m_1-m_0) \geq 0
\end{equation}
where strict inequality holds if $p_1 \neq p_0$.
\end{lemma}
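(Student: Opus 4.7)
The plan is to prove the inequality by parametrizing the straight-line path between $(m_0,p_0)$ and $(m_1,p_1)$ and differentiating a carefully chosen scalar function along it. Specifically, for $s \in [0,1]$ set $m_s := (1-s)m_0 + s m_1$ and $p_s := (1-s)p_0 + s p_1$, and define
$$
\Psi(s) := m_s \nabla_p H(m_s, p_s) \cdot (p_1 - p_0) - H(m_s, p_s)(m_1 - m_0).
$$
Plugging in $s=0$ and $s=1$ one sees immediately that $\Psi(1) - \Psi(0)$ is exactly the left-hand side of the desired inequality, so the task reduces to showing $\Psi'(s) \geq 0$ on $[0,1]$, with strict positivity when $p_1 \neq p_0$.

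Next, I would differentiate $\Psi$ using the chain rule, treating $m_s$ and $p_s$ as functions of $s$ with derivatives $m_1 - m_0$ and $p_1 - p_0$. A direct calculation produces four terms, of which two cancel (the $(m_1-m_0)\nabla_p H \cdot (p_1-p_0)$ coming from differentiating the prefactor $m_s$ cancels against the $\nabla_p H \cdot (p_1-p_0)(m_1-m_0)$ coming from differentiating $H(m_s,p_s)$ with respect to $p_s$). What remains is exactly
$$
\Psi'(s) = m_s \nabla_p^2 H(m_s,p_s)(p_1-p_0, p_1-p_0) + m_s (m_1-m_0)(p_1-p_0) \cdot \partial_m \nabla_p H(m_s,p_s) - \partial_m H(m_s,p_s)(m_1-m_0)^2.
$$

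At this point Assumption \ref{uniqueness assumption} is applied with the substitutions $m = m_s > 0$, $p = p_s$, $\tilde m = m_1 - m_0$, $\tilde p = p_1 - p_0$: the expression displayed above is precisely the quadratic form in \eqref{uniqueness hamiltonian}, so $\Psi'(s) \geq 0$. Integrating over $s \in [0,1]$ then yields the claimed monotonicity. For the strict inequality when $p_1 \neq p_0$, note that $\tilde p \neq 0$ forces the quadratic form in \eqref{uniqueness hamiltonian} to be strictly positive, so $\Psi'(s) > 0$ for every $s$.

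The only real subtlety is administrative: one must verify the cancellation in the derivative cleanly and match the resulting expression termwise with \eqref{uniqueness hamiltonian}; the convexity of $H$ in $p$ together with the $C^2$ hypothesis ensures all derivatives in sight are well defined, and positivity of $m_s$ along the segment (guaranteed by $m_0, m_1 > 0$) is what lets us invoke the assumption at every $s$. No further ingredient is needed beyond Assumption \ref{uniqueness assumption} and elementary calculus.
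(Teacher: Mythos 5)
Your proof is correct and takes essentially the same route as the paper: the paper's proof defines $\phi(\theta)=\Psi(\theta)-\Psi(0)$ along the same straight segment, computes the identical derivative after the same cancellation, and invokes \eqref{uniqueness hamiltonian} with $\tilde m=m_1-m_0$, $\tilde p=p_1-p_0$. The only divergence is in the strict case, where the paper is a bit more careful than your claim that $\Psi'(s)>0$ for \emph{every} $s$: it observes that $p_\theta$ can vanish for at most one $\theta$ (a real concern since $\nabla_p^2 H$ may degenerate or fail to exist at $p=0$, e.g.\ for $|p|^r$ with $r<2$) and concludes $\phi'(\theta)>0$ for all but at most one $\theta$, which still gives $\phi(1)>0$.
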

\begin{proof}
For $\theta \in [0,1]$, set $p_\theta = p_0 + \theta(p_1-p_0)$ and $m_\theta = m_0 + \theta(m_1-m_0)$,
and let
\begin{equation*}
\phi(\theta) = (m_\theta\nabla_p H(m_\theta,p_\theta)-m_0\nabla_p H(m_0,p_0)) \cdot (p_1-p_0)
  -(H(m_\theta,p_\theta)-H(m_0,p_0))(m_1-m_0).
\end{equation*}
We compute
\begin{multline*}
\phi'(\theta) = -\partial_m H(m_\theta,p_\theta)(m_1-m_0)^2 + m_\theta\nabla_p^2 H(m_\theta,p_\theta)(p_1-p_0,p_1-p_0)
\\ + m_\theta(m_1-m_0)(p_1-p_0)\cdot \partial_m \nabla_p H(m_\theta,p_\theta).
\end{multline*}
Use \eqref{uniqueness hamiltonian} to see that $\phi'(\theta) \geq 0$ for all $\theta \in [0,1]$.
Since $\phi(0) = 0$, it follows that $\phi(1) \geq 0$, which is what we wanted to prove.
In the case where $p_1 \neq p_0$, it follows that $p_\theta = 0$ for at most one $\theta \in [0,1]$, so the above calculation shows $\phi'(\theta) > 0$ for all but at most one value of $\theta \in [0,1]$.
We conclude that $\phi(1) > 0$, as desired.
\end{proof}

Now we prove Theorem \ref{uniqueness}.

Let $(u_1,m_1)$ and $(u_2,m_2)$ be two solutions.
Introduce the following sequence of auxiliary functions:
\begin{equation} \label{S_N}
S_n(r) = nS_1\left(\frac{r}{n}\right), ~~ S_1(r) = \int_0^r S'_1(t)dt, ~~ S'_1(r) = \left\{\begin{array}{cl}
1 &\text{if}~|s|\leq 1,\\
2 - |s| &\text{if}~1< |s| \leq 2,\\
0 &\text{if}~|s| > 2.
\end{array}\right.
\end{equation}
Then for $i=1,2$ and for $k > 0$ we have that $S_k(u_1),S_k(u_2) \in L^2(0,T;H^1(\Omega)) \cap L^\infty(Q)$, and $u_i$ satisfies the renormalized equation
\begin{equation}
\label{renormalized hamilton jacobi}
\begin{array}{c}
\vspace{5pt}
-\partial_t S_k(u_i) - \Delta S_k(u_i) + S_k'(u_i)H(t,x,m_i,\nabla u_i) = S_k'(u_i)f(m_i) - S_k''(u_i)|\nabla u_i|^2,\\
S_k(u_i)(T) = S_k(g(m_i(T))).
\end{array}
\end{equation}
in the sense of distributions.
Since by Proposition \ref{fokker planck bounds weak} we have $m_1,m_2 \in L^2(0,T;H^1(\Omega)) \cap L^\infty(Q)$, we can use $m_1-m_2$ as a test function in \eqref{renormalized hamilton jacobi} to get
\begin{multline} \label{k integration by parts}
-\int_Q [S_k''(u_1)|\nabla u_1|^2 -S_k''(u_2)|\nabla u_2|^2][m_1-m_2]
+ \int_{\Omega} [S_k(g(m_1(T)))-S_k(g(m_2(T)))][m_1(T)-m_2(T)] \\
+ \int_Q [S_k'(u_1)f(m_1)-S_k'(u_2)f(m_2)][m_1-m_2] \\
-\int_Q [S_k'(u_1)H(m_1,\nabla u_1)-S_k'(u_2)H(m_2,\nabla u_2)][m_1-m_2] \\
+ \int_Q [m_1\nabla_p H(m_1,\nabla u_1) -  m_2\nabla_p H(m_2,\nabla u_2)] \cdot \nabla [S_k(u_1)-S_k(u_2)]  = 0.
\end{multline}
As $k \to \infty$, $S_k''(u_i)|\nabla u_i|^2 \to 0$ strongly in $L^1(Q)$, for $i=1,2$.
Since $m_1$ and $m_2$ are bounded,
$$
-\int_Q [S_k''(u_1)|\nabla u_1|^2 -S_k''(u_2)|\nabla u_2|^2][m_1-m_2] \to 0, ~~ k \to \infty.
$$
As for the other terms, from the definition of weak solution we know that, for $i=1,2$, $H(m_i,\nabla u_i) \in L^1(Q)$ which implies $\nabla_p H(m_i,\nabla u_i) \in L^{r/(r-1)}(Q)$ by the assumptions on $H$ and the fact that $m_i$ and $m_i^{-1}$ are bounded.
On the other hand, $\nabla u_i \in L^r(Q)$.
So using the fact that $S_k' \to 1$ and $S_k(u) \to u$ and letting $k \to \infty$ in \eqref{k integration by parts}, we get
\begin{multline} \label{uniqueness integration by parts}
\int_Q [f(m_1)-f(m_2)][m_1-m_2]
+ \int_{\Omega} [g(m_1(T))-g(m_2(T))][m_1(T)-m_2(T)] \\
+ \int_Q [m_1\nabla_p H(m_1,\nabla u_1) -  m_2\nabla_p H(m_2,\nabla u_2)] \cdot \nabla [u_1-u_2] -\int_Q [H(m_1,\nabla u_1)-H(m_2,\nabla u_2)][m_1-m_2]  = 0.
\end{multline}
Then we use Lemma \ref{convexity} and the fact that $f$ and $g$ are strictly increasing to deduce that $m_1 = m_2$, $\nabla u_1 = \nabla u_2$, and $u_1(T) = u_2(T)$, from which it follows that $u_1 = u_2$ as well.
This completes the proof.

\section{Mean field type control}

\label{section mean field control}

Consider now the following system, which can be thought of as a companion to \eqref{mfg}.
\begin{equation}\label{mftc}\left\{
\begin{array}{cll}
\vspace{5pt}
(i) & -\partial_t u - \Delta u + H(t,x,m,\nabla u) + m\partial_m H(t,x,m,\nabla u) = f(t,x,m) &\text{in}~~Q  \\
\vspace{5pt}
(ii) & \partial_t m - \Delta m - \mathrm{div} \left(m\nabla_p H(t,x,m,\nabla u)\right) = 0 &\text{in}~~Q  \\
(iii) & u(T,x) = g(x,m(T,x)),~~ m(0,x) = m_0(x) &\text{in}~~\Omega 
\end{array}\right.
\end{equation}
The difference between \eqref{mftc} and \eqref{mfg} is the addition of the term $m\partial_m H(t,x,m,\nabla u)$, which makes the system into a condition of optimality for a control problem.
See, for instance, \cite{bensoussan13a}.
Weak solutions of \eqref{mftc} can be defined in a way analogous to Definition \ref{definition of solutions} for mean field games.

Here we describe the mean field type optimal control problem solved by solutions to \eqref{mftc}.
Let the Lagrangian $L = L(t,x,m,b)$ be given by
$$
L(t,x,m,b) = \sup_{p} p \cdot b - H(t,x,m,-p)
$$
Let $\s{K}$ be the set of all pairs $(m,w) \in (L^2(0,T;H^1(\Omega))_+ \cap L^\infty(Q)) \times L^{r/(r-1)}(Q)$ such that
$$
\partial_t m - \Delta m + \mathrm{div}(w) = 0, ~~ m(0) = m_0
$$
holds in the sense of distributions.
Define
$$
\tilde{L}(t,x,m,w) =
mL\left(t,x,m,\frac{w}{m}\right), ~~ (t,x) \in Q, ~ m > 0, ~~ w \in \bb{R}^d.
$$
Observe that $w \mapsto \tilde{L}(t,x,m,w)$ is obtained by taking the Legendre transform of $p \mapsto mH(t,x,m,-p)$.

In this section we consider the optimal control problem of minimizing the objective functional
\begin{equation}
\label{objective functional}
J(m,w) = \int_0^T \int_\Omega \tilde{L}(t,x,m(t,x),w(t,x)) + F(t,x,m(t,x)) dx dt + \int_{\Omega} G(x,m(T,x))dx
\end{equation}
over all $(m,w) \in \s{K}$.
Here $F$ and $G$ are given by
$$
F(t,x,m) = \int_{0}^{m} f(t,x,s)ds, ~~~~ G(x,m) = \int_{0}^{m} g(x,s)ds.
$$

The assumptions of Section \ref{assumptions} are still in force.
We also add the following hypotheses:
\begin{enumerate}
\item We will assume in addition to the previous assumptions that for all $(t,x) \in Q$, the map $(m,p) \mapsto H(t,x,m,p)$ is $C^1$ on $(0,\infty) \times \bb{R}^d$.
\item We assume also that for all $(t,x) \in Q$, the map $(m,w) \mapsto \tilde{L}(t,x,m,w)$ is $C^1$ and convex on $(0,\infty) \times \bb{R}^d$ and that \eqref{H positive} holds for $\partial_m H$ in place of $H$.
\item Finally, we add the assumption that either $(m,w) \mapsto \tilde{L}(t,x,m,w)$ is strictly convex or else $w \mapsto \tilde{L}(t,x,m,w)$ and $m \mapsto F(t,x,m)$ are both strictly convex.
\end{enumerate}
Let us remark that this last assumption on {\em strict} convexity is required for uniqueness of solutions.
Existence of minimizers will hold even if the convexity is not strict.
\begin{remark}
An example of a Hamiltonian $H$ satisfying the above assumptions is the canonical example
$$
H(t,x,m,p) = \frac{|p|^r}{rm^\alpha}
$$
whenever $\alpha \geq 0$ is any non-negative number.
In particular, we remark that uniqueness holds for the mean field type control system \ref{mftc} for a much larger class of Hamiltonians than for the mean field game \eqref{mfg}.
\end{remark}

In addition, we make the following deductions concerning $F$ and $G$:
\begin{itemize}
\item Since $m \mapsto f(t,x,m)$ and $m \mapsto g(x,m)$ are continuous and nondecreasing, it follows that $m \mapsto F(t,x,m)$ and $m \mapsto G(x,m)$ are $C^1$ and convex.
Moreover, it follows from \eqref{fL and gL defined} that
\begin{equation}
F_L(t,x) := \sup_{m \in [0,L]} F(t,x,m), ~~~ G_L(x) := \sup_{m \in [0,L]} G(x,m)
\end{equation}
are both integrable for all $L > 0$.
\item In addition, we remark that since $f$ and $g$ are bounded below, we have
\begin{equation}
F(t,x,m) \geq -Cm, ~~ G(x,m) \geq -Cm
\end{equation}
for some constant $C > 0$.
\end{itemize}
Because $\tilde{L}$ is strictly convex in $w$, we note that the equivalence of the first-order optimality criteria
\begin{equation}
\label{first order equivalence}
-p = \nabla_w \tilde{L}(t,x,m,w) 
~~ \Leftrightarrow ~~
\tilde{L}(t,x,m,w) + mH(t,x,m,p) = -p \cdot w
~~ \Leftrightarrow ~~
-w = m\nabla_p H(t,x,m,p).
\end{equation}
In particular, if any of these criteria hold, then by taking partial derivatives in $m$ one obtains $H(t,x,m,p) + m\partial_m H(t,x,m,p) = -\partial_m \tilde{L}(t,x,m,w)$.

We now come to the main result of this section.
\begin{theorem} \label{existence for mftc}
Under the assumptions above, and given $T$ small enough, there exists a unique solution $(u,m)$ of \eqref{mftc}.
The pair $(m,-\nabla_p H(t,x,m,\nabla u))$ is the unique minimizer of $J$ in $\s{K}$.
\end{theorem}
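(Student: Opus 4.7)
My plan is to use the variational structure: first establish existence of a minimizer of $J$ on $\s{K}$ by the direct method, then construct the adjoint variable $u$ and verify that the pair satisfies \eqref{mftc}, and finally deduce uniqueness from strict convexity of $J$.

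For existence of a minimizer, $J$ is bounded below on $\s{K}$: the lower bound \eqref{H positive} on $H$ gives $L \geq -C$, hence $\tilde{L}(m,w) \geq -Cm$, and combined with $F(m), G(m) \geq -Cm$ and the mass identity $\int_\Omega m(t) = \int_\Omega m_0$ (obtained by testing the constraint in $\s{K}$ with $\phi \equiv 1$) one gets $J \geq -C$. For a minimizing sequence $(m_n,w_n) \in \s{K}$, the bound on $\int \tilde{L}(m_n, w_n)$ together with \eqref{H positive} controls $|w_n|^{r/(r-1)}$ weighted by suitable powers of $m_n$ and $m_n^{-1}$, precisely in the form required by \eqref{b estimate} with $b_n = w_n/m_n$. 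Proposition \ref{fokker planck bounds weak} then yields uniform $L^\infty$ bounds on $m_n$ and $m_n^{-1}$ for $T$ small, together with the $L^2(0,T;H^1(\Omega))$ bound. Extracting a subsequence via Aubin--Lions gives $m_n \to m^*$ strongly in $L^2(Q)$ and a.e., with $w_n \rightharpoonup w^*$ weakly in $L^{r/(r-1)}(Q)$; lower semicontinuity of $J$ follows from convexity of $\tilde{L}$ in $(m,w)$ (Ioffe) and convexity of $F, G$ in $m$ (Fatou), producing a minimizer $(m^*, w^*) \in \s{K}$.

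Next, the $L^\infty$ bounds on $m^*$ and $1/m^*$ allow the existence theory of Section \ref{existence section} to produce a weak solution $u^*$ of the backward Hamilton--Jacobi equation in \eqref{mftc}, the extra bounded term $m^* \partial_m H(m^*, \nabla u)$ being handled by the same estimates since \eqref{H positive} is assumed to hold also for $\partial_m H$. Taking admissible variations $(m^* + \epsilon\tilde{m}, w^* + \epsilon \tilde{w})$ with $(\tilde m, \tilde w)$ tangent to $\s{K}$ and testing against $u^*$---the integration by parts being justified by the regularity coming from Proposition \ref{fokker planck bounds weak}---the first-order condition in $w$ yields $\nabla u^* = -\nabla_w \tilde{L}(m^*, w^*)$, which by \eqref{first order equivalence} is exactly $w^* = -m^* \nabla_p H(m^*, \nabla u^*)$. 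The first-order condition in $m$, combined with the envelope identity $\partial_m \tilde{L} = -H - m\partial_m H$ recorded just before the theorem, reproduces the Hamilton--Jacobi equation in \eqref{mftc}.

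Strict convexity of $J$ under the last hypothesis then forces the minimizer to be unique. Conversely, any weak solution $(u,m)$ of \eqref{mftc} yields an admissible pair $(m, -m\nabla_p H(m,\nabla u)) \in \s{K}$, and a direct energy computation---using the Fenchel--Young equality at the critical point together with monotonicity of $f$ and $g$---shows this pair minimizes $J$; by uniqueness of the minimizer, the solution of \eqref{mftc} is unique. The chief technical obstacle is the rigorous justification of the integration by parts when deriving the Euler--Lagrange equations for weak solutions, which is precisely what Proposition \ref{fokker planck bounds weak} and the renormalized-solutions framework from Section \ref{uniqueness section} provide.
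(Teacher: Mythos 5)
Your proposal is correct and follows essentially the same route as the paper: the direct method with the a priori $L^\infty$ bounds on $m_n$ and $m_n^{-1}$ from the Fokker--Planck proposition, lower semicontinuity from convexity of $\tilde L$, $F$, $G$, the adjoint equation and first-order condition $-\nabla u = \nabla_w \tilde L(t,x,m,w)$ combined with the envelope identity to recover \eqref{mftc}, and strict convexity plus the converse (solution $\Rightarrow$ critical point $\Rightarrow$ minimizer) for uniqueness. The only cosmetic difference is that the paper defines $u$ directly as the solution of the linear adjoint equation with $L^1$ right-hand side rather than appealing to the nonlinear existence theory, but this does not change the argument.
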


The proof of Theorem \ref{existence for mftc} comes in two Lemmas.
\begin{lemma} \label{minimizer exists}
There exists a unique minimizer in $\s{K}$ of $J(m,w)$.
\end{lemma}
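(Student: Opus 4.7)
The plan is to apply the direct method of the calculus of variations. I would begin by verifying that $\mathcal K$ is nonempty and contains a pair with finite $J$-value: taking $w\equiv 0$ and letting $m$ solve the heat equation $\partial_t m - \Delta m = 0$ with $m(0)=m_0$ under the prescribed boundary conditions, the maximum principle yields $0 < c \leq m \leq \|m_0\|_\infty$, so $(m,0)\in \mathcal K$. Since $\tilde L(t,x,m,0) = -m\,\inf_p H(t,x,m,-p)$ is controlled by the standing assumptions on $H$ while $F_{\|m_0\|_\infty}$ and $G_{\|m_0\|_\infty}$ are integrable, we have $J(m,0)<\infty$.

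Next I would show $J\geq -C$ on $\mathcal K$. The lower bounds $F(m)\geq -Cm$ and $G(m)\geq -Cm$, combined with a uniform mass bound $\int_\Omega m(t)\,dx \leq C$ obtained by testing the continuity equation against $1$ in the periodic and Neumann cases or against a suitable supersolution in the Dirichlet case, dominate the negative contributions. The coercivity of $L$ inherited from \eqref{H positive} yields that $\tilde L$ is nonnegative up to a linear-in-$m$ correction, so $J$ is bounded below uniformly on $\mathcal K$.

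For a minimizing sequence $(m_n,w_n)\subset \mathcal K$, I would extract compactness as follows. The coercivity of $\tilde L$ in $w$ of order $r' = r/(r-1) > 2$, combined with the pointwise bounds $\|m_n\|_\infty + \|1/m_n\|_\infty \leq C$ supplied by Proposition \ref{fokker planck bounds weak} applied to the drift $b_n = -w_n/m_n$ for $T$ small, yields a uniform bound on $w_n$ in $L^{r/(r-1)}(Q)$, a uniform bound on $m_n$ in $L^\infty(Q)\cap L^2(0,T;H^1(\Omega))$, and a uniform bound on $\partial_t m_n$ in a negative Sobolev space. An Aubin--Lions argument then gives, along a subsequence, $m_n \to m$ strongly in $L^2(Q)$ and almost everywhere, and $w_n \rightharpoonup w$ weakly in $L^{r/(r-1)}(Q)$, with $(m,w)\in \mathcal K$ since the constraint is linear. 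Joint convexity of $(m,w)\mapsto \tilde L(t,x,m,w)$ and convexity of $m\mapsto F, G$ then yield weak lower semicontinuity of $J$, so $(m,w)$ is a minimizer. Uniqueness follows from the strict convexity hypothesis: given two distinct minimizers, their midpoint is admissible and achieves strictly smaller $J$-value, a contradiction.

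The principal obstacle is the compactness step, specifically producing the uniform bounds on $m_n$ and $1/m_n$ needed to convert the coercivity of $\tilde L$ into genuine $L^{r/(r-1)}$ control of $w_n$. This requires extracting from the finite $\tilde L$-part of $J(m_n, w_n)$ a uniform bound of the form $\|m_n^{\beta} b_n\|_{r/(r-1)}\leq C$ for appropriate $\beta$, so that the hypothesis \eqref{b estimate} of Proposition \ref{fokker planck bounds weak} is met; this should follow from the pointwise Legendre relation $\tilde L(m,w) = mL(m, w/m)$ together with the polynomial growth of $L$ dictated by duality with \eqref{H positive}. Once these pointwise bounds are in hand, the rest of the argument is a routine implementation of the direct method.
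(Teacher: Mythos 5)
Your proposal is correct and follows essentially the same route as the paper: extract from the finiteness of the $\tilde L$-part of $J(m_n,w_n)$ a bound of the form $|b_n|\leq C(m_n^{\eta}+m_n^{-\eta})(1+v_n)$ with $v_n^{r/(r-1)}=m_nL(t,x,m_n,b_n)$ bounded in $L^1$, feed this into Proposition \ref{fokker planck bounds} to get the $L^\infty$ bounds on $m_n$ and $1/m_n$ and the $L^2(0,T;H^1)$ bound, then pass to the limit using weak convergence of $w_n$ in $L^{r/(r-1)}$, Fatou for $F$ and $G$, joint convexity of $\tilde L$ for lower semicontinuity, and strict convexity for uniqueness. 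The one step you flag as an obstacle is carried out in the paper exactly as you anticipate, via the duality lower bound $L(t,x,m,b)\geq |b|^{r/(r-1)}/\bigl(C(m^{\lambda}+m^{-\lambda})\bigr)-C(m^{\lambda}+m^{-\lambda})$ inherited from \eqref{DH bounded}.
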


\begin{proof}
Let $(m_n,w_n)$ be a sequence in $\s{K}$ such that $J(m_n,w_n)$ converges to $\inf_{(m,w) \in \s{K}} J$.
Set $b_n = w_n/m_n$.
Note that from \eqref{DH bounded} and the definition of $L$ we can deduce that
\begin{equation} \label{L bounded below}
L(t,x,m,b) \geq \frac{|b|^{r/(r-1)}}{C(m^\lambda + m^{-\lambda})} - C(m^\lambda + m^{-\lambda}).
\end{equation}
Since $(m_n,m_nb_n)$ is a minimizing sequence of $J$, then using the estimates on $F$ and $G$ it follows that
\begin{equation} \label{minimizing sequence}
\int_Q m_nL(t,x,m_n,b_n) \leq C + C\int_Q m_n + C\int_{\Omega} m_n(T) \leq C\|m_0\|_1.
\end{equation}
From \eqref{L bounded below} we deduce that, for some $\eta > 0$,
\begin{equation}
|b_n| \leq C(m_n^{\eta} + m_n^{-\eta})(1 + v_n), ~~~ v_n^{r/(r-1)} := m_nL(t,x,m_n,b_n),
\end{equation}
where $\|v_n\|_{r/(r-1)} \leq C$ by \eqref{minimizing sequence}.

Appealing to Proposition \ref{fokker planck bounds}, it follows that $m_n$ is bounded in $L^2(0,T;H^1(\Omega))$ and that $m_n$ and $m_n^{-1}$ are bounded in $L^\infty$.
Passing to a subsequence, we have that $m_n$ converges weakly in $L^2(0,T;H^1(\Omega))$ and pointwise almost everywhere to a function $m \in L^\infty$ such that $1/m$ is also in $L^\infty$.
Moreover, \eqref{L bounded below} and \eqref{minimizing sequence} now imply that $b_n$ is a bounded sequence in $L^{r/(r-1)}$.
Passing to a subsequence we conclude that $b_n$ converges weakly in $L^{r/(r-1)}(Q)$ to some $b$.

We claim $J(m,mb) \leq \liminf_{n \to \infty} J(m_n,m_nb_n)$.
First, notice that for $C > 0$ large enough,
$$
\int_Q F(t,x,m) + Cm \leq \liminf_{n \to \infty} \int_Q F(t,x,m_n) + Cm_n
$$
by Fatou's Lemma.
By taking a subsequence we have $m_n \to m$ strongly in $L^1$, so we get
\begin{equation} \label{F limit}
\int_Q F(t,x,m) \leq \liminf_{n \to \infty} \int_Q F(t,x,m_n).
\end{equation}
A similar argument shows that
\begin{equation}\label{G limit}
\int_{\Omega} G(x,m)  \leq \liminf_{n \to \infty} \int_{\Omega} G(x,m_n).
\end{equation}
Next, because $\tilde{L}$ is convex in $(m,w)$,
\begin{equation} \label{mL limit}
\int_Q \tilde{L}(t,x,m,mb) \leq \liminf_{n \to \infty} \int_Q \tilde{L}(t,x,m_n,m_nb_n).
\end{equation}
Putting \eqref{F limit}, \eqref{G limit}, and \eqref{mL limit} together we see that $J(m,mb) \leq \liminf_{n\to \infty}J(m_n,w_n)$, which implies that $J(m,mb)$ is the minimum over $\s{K}$ of $J$.

To see that $(m,mb)$ is unique, it suffices to note that by the assumptions given above, $J$ is strictly convex.
\end{proof}

\begin{lemma} \label{minimizer is solution}
If $(m,w) \in \s{K}$ is the minimizer of $J$, then $(u,m)$ is a solution of \eqref{mftc}, where $u \in C([0,T];L^1(\Omega))$ solves the adjoint equation
\begin{equation}
\label{adjoint equation}
-\partial_t u - \Delta u = f(m) + \partial_m \tilde{L}(t,x,m,w), ~~ u(T) = g(m(T)).
\end{equation}
Conversely, if $(u,m)$ is a solution of \eqref{mftc}, then $(m,-m\nabla_p H(t,x,m,\nabla u))$ is in $\s{K}$ and is the minimizer of $J$.
\end{lemma}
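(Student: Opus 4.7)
The plan is to carry out the standard convex-duality computation that identifies solutions of \eqref{mftc} with minimizers of $J$, using the regularity provided by Proposition \ref{fokker planck bounds weak} and Corollary \ref{integrability of solutions} to justify the integrations by parts.

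For the forward direction, suppose $(m,w) \in \mathcal{K}$ minimizes $J$. Proposition \ref{fokker planck bounds weak} applied to the Fokker--Planck equation $\partial_t m - \Delta m + \mathrm{div}(w) = 0$ (rewriting $w = mb$ with $b = w/m$) gives $m,\,1/m \in L^\infty(Q)$, so the right-hand side $f(m) + \partial_m \tilde{L}(t,x,m,w)$ of the adjoint equation \eqref{adjoint equation} is well-defined, and one obtains a unique weak solution $u$ by standard linear parabolic theory. Then for any admissible variation $(\delta m,\delta w)$ satisfying $\partial_t \delta m - \Delta \delta m + \mathrm{div}(\delta w) = 0$ with $\delta m(0) = 0$, the first-order optimality condition reads
\begin{equation*}
\int_Q \bigl(\partial_m \tilde{L}(m,w) + f(m)\bigr)\delta m + \nabla_w \tilde{L}(m,w)\cdot \delta w + \int_\Omega g(m(T))\,\delta m(T) = 0.
\end{equation*}
Testing \eqref{adjoint equation} against $\delta m$ and integrating by parts (using $u(T) = g(m(T))$ and the linearized constraint) converts this into $\int_Q (\nabla u + \nabla_w \tilde{L}(m,w))\cdot \delta w = 0$ for a sufficiently rich class of $\delta w$, giving $\nabla u = -\nabla_w \tilde{L}(m,w)$. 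By the equivalence \eqref{first order equivalence} this yields $w = -m\nabla_p H(t,x,m,\nabla u)$ and $\partial_m \tilde{L}(m,w) = -H(m,\nabla u) - m\partial_m H(m,\nabla u)$, and substituting into \eqref{adjoint equation} produces precisely equation (i) of \eqref{mftc}; the Fokker--Planck equation becomes (ii) automatically.

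For the converse, suppose $(u,m)$ solves \eqref{mftc} and set $w := -m\nabla_p H(m,\nabla u)$. Corollary \ref{integrability of solutions} (whose hypotheses are available thanks to Proposition \ref{fokker planck bounds weak}) gives $m \in L^2(0,T;H^1(\Omega)) \cap L^\infty(Q)$ and $w \in L^{r/(r-1)}(Q)$, so $(m,w) \in \mathcal{K}$. For any competitor $(\tilde m,\tilde w) \in \mathcal{K}$, convexity of $\tilde L$ in $(m,w)$ together with \eqref{first order equivalence} yields
\begin{equation*}
\tilde L(\tilde m,\tilde w) - \tilde L(m,w) \geq \bigl(-H(m,\nabla u) - m\partial_m H(m,\nabla u)\bigr)(\tilde m - m) - \nabla u \cdot (\tilde w - w),
\end{equation*}
and convexity of $F$ and $G$ supplies $F(\tilde m) - F(m) \geq f(m)(\tilde m - m)$ and the analogous terminal inequality. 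Summing and using equation (i) of \eqref{mftc} in the form $-\partial_t u - \Delta u = f(m) - H - m\partial_m H$, then integrating by parts against $\tilde m - m$ (which satisfies the linearized constraint with zero initial trace) makes the resulting boundary and divergence terms cancel identically, so $J(\tilde m,\tilde w) \geq J(m,w)$.

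The main technical obstacle is justifying the integration by parts between $u$ and $\tilde m - m$ (respectively $\delta m$), since $u$ is only a weak solution of a semilinear parabolic equation. The plan is to proceed exactly as in the proof of Theorem \ref{uniqueness}: test the renormalized formulation of \eqref{adjoint equation} (and of the Hamilton--Jacobi equation in the converse) against the truncations $S_k(u)$ defined in \eqref{S_N}, use the bounds $m, 1/m \in L^\infty$, $\nabla u \in L^r(Q)$, and $\nabla_p H(m,\nabla u) \in L^{r/(r-1)}(Q)$ from Corollary \ref{integrability of solutions} to control all cross terms, and pass to the limit $k \to \infty$; the term $S_k''(u)|\nabla u|^2$ vanishes in $L^1(Q)$ exactly as in \eqref{k integration by parts}. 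Uniqueness of the minimizer then follows from Lemma \ref{minimizer exists}, and uniqueness of the solution of \eqref{mftc} is an immediate consequence of the two correspondences together with strict convexity of $J$.
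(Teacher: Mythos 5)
Your proposal is correct and follows essentially the same route as the paper: compute the G\^ateaux derivative of $J$, introduce the adjoint variable $u$, integrate by parts against the linearized constraint to obtain $\nabla u = -\nabla_w \tilde{L}(t,x,m,w)$, and convert via \eqref{first order equivalence}; for the converse, the paper phrases the argument as ``the G\^ateaux derivative vanishes at $(m,w)$, hence convexity gives minimality,'' which is the same convexity argument you write out explicitly via subgradient inequalities. Your extra care with the renormalized integration by parts is more than the paper itself supplies but is consistent with its framework.
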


\begin{proof}
Let $(m,w) \in \s{K}$ be the minimizer of $J$.
By the proof of Lemma \ref{minimizer exists}, $m$ and $1/m$ are bounded.
It follows that $J$ is G\^ateaux differentiable at $(m,w)$.
To see this, observe that for any $(m_1,w_1) \in \s{K}$ we have that $(\tilde{m},\tilde{w}) = (m_1-m,w_1-w)$ is a pair in $L^2(0,T;H^1(\Omega)) \times L^{r/(r-1)}(Q)$ satisfying
\begin{equation}
\label{fokker planck 0}
\partial_t \tilde{m} - \Delta \tilde{m} + \mathrm{div}(\tilde{w}) = 0, ~~ \tilde{m}(0) = 0.
\end{equation}
We calculate
\begin{multline} \label{gateaux 1}
\lim_{\theta \to 0} \frac{1}{\theta}J(m+\theta \tilde{m},w+\theta \tilde{w}) \\
= \int_0^T \int_{\Omega} \partial_m \tilde{L}(t,x,m,w)\tilde{m} + \nabla_w \tilde{L}(t,x,m,w) \cdot \tilde{w} + f(t,x,m)\tilde{m} + \int_{\Omega} g(x,m(T))\tilde{m}(T).
\end{multline}
The right-hand side is well-defined: using \eqref{H positive}, \eqref{L bounded below}, and \eqref{first order equivalence} together with the fact that $m$ and $1/m$ are bounded we arrive at
$$
|\nabla_w \tilde{L}(t,x,m,w)| \leq C|w|^{1/(r-1)} + C,
$$ 
which implies $\nabla_w \tilde{L}(t,x,m,w) \in L^{r}$ because $w \in L^{r/(r-1)}$;
on the other hand, $\tilde{m} \in L^\infty(Q)$ by Proposition \ref{fokker planck bounds weak}, $\partial_m \tilde{L}(t,x,m,w) \in L^1(Q)$ by \eqref{H positive} applied to $\partial_m H$, and $f(t,x,m)$ and $g(x,m(T))$ are integrable because $m$ is bounded.

Introduce the adjoint variable $u \in C([0,T];L^1(\Omega))$ by taking the solution of \eqref{adjoint equation},
where we note that the right-hand side is in $L^1(Q)$.
Then we have, by \eqref{fokker planck 0},
\begin{multline} \label{gateaux 2}
\lim_{\theta \to 0} \frac{1}{\theta}J(m+\theta \tilde{m},w+\theta \tilde{w}) = \int_0^T \int_{\Omega} (-\partial_t u - \Delta u)\tilde{m} + \nabla_w \tilde{L}(t,x,m,w) \cdot \tilde{w} + \int_{\Omega} g(x,m(T))\tilde{m}(T)
\\ = \int_0^T \int_{\Omega} u(\partial_t \tilde{m} - \Delta \tilde{m}) + \nabla_w \tilde{L}(t,x,m,w) \cdot \tilde{w}
= \int_0^T \int_{\Omega} \left(\nabla u + \nabla_w \tilde{L}(t,x,m,w)\right) \cdot \tilde{w}.
\end{multline}
By optimality of $(m,w)$, the value of this G\^ateaux derivative is zero.
Since $\tilde{w}$ is arbitrary, it follows that
$$
-\nabla u = \nabla_w \tilde{L}(t,x,m,w),
$$
and so by \eqref{first order equivalence} we get
\begin{equation}
\label{H and L}
H(t,x,m,\nabla u) + m\partial_m H(t,x,m,\nabla u) = -\partial_m \tilde{L}(t,x,m,w).
\end{equation}
Therefore by substitution $u$ solves
$$
-\partial_t u - \Delta u + H(t,x,m,\nabla u) + m\partial_m H(t,x,m,\nabla u) = f(t,x,m), ~~ u(T) = g(x,m(T)),
$$
while $m$ solves
$$
\partial_t m - \Delta m - \mathrm{div}(m\nabla_p H(t,x,m,p)) = 0, ~~ m(0) = m_0,
$$
as desired.

Finally, we show that solutions of \eqref{mftc} are equivalent to minimizers of $J$.
Let $(u,m)$ be a weak solution of \eqref{mftc}.
Set $w = -m\nabla_pH(t,x,m,p)$.
Note that $(m,w) \in \s{K}$.
Following the reasoning in the previous step, we obtain that $\nabla u = -\nabla_w \tilde{L}(t,x,m,w)$ and that \eqref{H and L} holds.
Then by the calculations in \eqref{gateaux 1} and \eqref{gateaux 2} we get
\begin{equation}
\lim_{\theta \to 0} \frac{1}{\theta}J(m+\theta \tilde{m},w+\theta{w}) = \int_{0}^T \int_{\Omega} \left(\nabla u + \nabla_w \tilde{L}(t,x,m,w)\right) \cdot \tilde{w} = 0,
\end{equation}
hence the G\^ateaux derivative of $J$ at $(m,w)$ is zero.
By the strict convexity of $J$, it follows that $(m,w)$ is a minimizer of $J$.
\end{proof}

\begin{proof}
[Proof of Theorem \ref{existence for mftc}]
By Lemmas \ref{minimizer exists} and \ref{minimizer is solution}, there exists a solution $(u,m)$ to \eqref{mftc}.
On the other hand, if $(\tilde{u},\tilde{m})$ is another solution to \eqref{mftc}, then  $(\tilde{m},-\nabla_p H(t,x,\tilde{m},\nabla \tilde{u}))$ is the unique minimizer of $J$, so $\tilde{m} = m$.
Since $u$ and $\tilde{u}$ both satisfy the adjoint equation \eqref{adjoint equation}, it follows that $u = \tilde{u}$ as well.
\end{proof}

To conclude, we remark on some possible generalizations of Theorem \ref{existence for mftc}.
The existence of minimizers for the functional \eqref{objective functional} will hold under much broader assumptions than those presented above.
In this article the hypotheses leading to the use of Proposition \ref{fokker planck bounds} ensure the G\^ateaux differentiability of \eqref{objective functional}, allowing a classical optimal control argument yielding the solutions of \eqref{mftc}.
For cases where the a priori bounds in Proposition \ref{fokker planck bounds} are not applicable, one may instead use the strategy adopted in \cite{cardaliaguet2014second} using Fenchel-Rockafellar duality.
Accordingly, one seeks to prove the existence of minimizers both for the optimal control problem $\min_{(m,w) \in \s{K}} J(m,w)$ and its dual, the latter of which is solved in a suitably relaxed setting.
In this way one overcomes the lack of differentiability of $J(m,w)$ due to its singular dependence on $m$.
This method can also be used to construct solutions of \eqref{mftc} under less restrictive conditions on the Hamiltonian.
We can expect this to be the subject of future research.

\bibliographystyle{siam}
\bibliography{C:/mybib/mybib}

\end{document}